\newtheorem{theorem}{Theorem}[section]
\newtheorem{corollary}[theorem]{Corollary}
\newtheorem{lemma}[theorem]{Lemma}
\newtheorem{proposition}[theorem]{Proposition}
\theoremstyle{remark}
\newtheorem{question}[theorem]{Question}
\newtheorem{remark}[theorem]{Remark}
\numberwithin{equation}{section}
\newcommand{\B}{\mathbb{B}}
\newcommand{\E}{\mathcal{E}}
\newcommand{\Em}{\mathcal{E}_m}
\newcommand{\Eom}{\mathcal{E}^0_m}
\newcommand{\F}{\mathcal{F}}
\newcommand{\Fm}{\mathcal{F}_m}
\newcommand{\SHm}{SH_m}
\newcommand{\n}{\noindent}
\newcommand{\fr}{\frac}
\newcommand{\ve}{\varepsilon}
\newcommand{\la}{\lambda}
\newcommand{\psh}{\text{plurisubharmonic}}
\newcommand{\vol}{\text{vol}}
\newcommand{\de}{\delta}
\newcommand{\nhd}{neighbourhood}
\newcommand{\om}{\omega}
\newcommand{\Om}{\Omega}
\begin{document}
\author{Nguyen Quang Dieu \textit{$^{1,2}$} and Do Thai Duong \textit{$^{3}$} }
\address{\textit{$^{1}$}~Department of Mathematics, Hanoi National University of Education, 136 Xuan Thuy, Cau Giay, Hanoi, Vietnam}
\address{\textit{$^{2}$}~Thang Long Institute of Mathematics and Applied Sciences,
	Nghiem Xuan Yem, Hoang Mai, HaNoi, Vietnam}
\email{dieu$\_$vn$@$yahoo.com, ngquang.dieu@hnue.edu.vn}
\address{\textit{$^{3}$}~Institute of Mathematics, Vietnam Academy of Science and Technology, 18 Hoang Quoc Viet, Cau Giay, 100000 Hanoi, Vietnam} 
\email{dtduong@math.ac.vn}
\title{Decay near boundary of volume of sublevel sets of $m-$subharmonic functions}
%\thanks{The research of the author was supported VNU}
\subjclass[2000]{Primary 32U15; Secondary 32B15}
\keywords{$m-$subharmonic functions, $m-$complex Hessian measures, sublevel sets}
\date{\today}
\maketitle
\begin{abstract}
We investigate decay near  boundary of the volume of sublevel sets in Cegrell classes of $m-$ subharmonic function on bounded domains in $\mathbb C^n.$ On the reverse direction, some sufficient conditions for membership in certain Cegrell's classes, in terms of the decay of the sublevel sets, are also discussed.
\end{abstract} 
\tableofcontents
\section{Introduction}
Let $\Om$ be a domain in $\mathbb C^n$ and let $u$ be a subharmonic function defined on $\Om$. Then, for an integer $m, 1\leq m\leq n$, according to  Li in \cite{Li}, we say that $u$ is $m-$subharmonic function if for every
$\alpha_1,...,\alpha_{m-1}\in\Gamma_m$, the inequality
$$dd^cu\wedge\alpha_1\wedge...\wedge\alpha_{m-1}\wedge\omega^{n-m}\geq0,$$
holds in the sense of currents. Here we define
$$\Gamma_m:=\{\alpha\in C_{(1,1)}:\alpha\wedge\omega^{n-1}\geq0,...,\alpha^m\wedge\omega^{n-m}\geq0\},$$
where $\omega:=dd^c|z|^2$ is the canonical K\"ahler form in $\mathbb C^n$ and $C_{(1,1)}$ is the set of $(1,1)-$forms with constant coefficients. Denote by $\SHm(\Omega)$ the set of all $m-$subharmonic	functions in $\Omega$, and $\SHm^-(\Omega)$ for the set of all non-positive $m-$subharmonic functions in $\Omega$. The following chain of inclusions is then obvious
$$PSH=SH_n\subset...\subset SH_1=SH.$$
The border cases, $SH_1$ and $SH_n$, of course,  correspond to subharmonic function and plurisubharmonic functions which are of fundamental importance in potential theory and pluripotential theory respectively.
Later on, using Bedford-Taylor's induction method in \cite{BT},
Blocki extended the definition of the complex $m-$Hessian operator $(dd^cu)^m\wedge\omega^{n-m}$ to locally bounded $m$-subharmonic functions in \cite{Blo05}. 	In particular, if $u \in\SHm(\Omega)\cap L^\infty_{loc}(\Omega)$ then the Borel measure 
$(dd^cu)^m\wedge\omega^{n-m}$
is well-defined and is called the complex $m-$Hessian of $u$.

\n
More recently, in \cite{Lu15}, Lu following the framework of Cegrell (in \cite{Ceg98} and \cite{Ceg04}) studied the domain of existence for the complex $m-$Hessian operator. For this purpose, he introduced finite energy classes of m-subharmonic functions of Cegrell type on bounded $m-$ hyperconvex domains $\Om$,
i.e., domains that admit a negative $m-$subharmonic exhaustion function
\begin{align*}
\Eom(\Om)&=\{u\in\SHm^-(\Om)\cap L^\infty(\Om):\lim\limits_{z\rightarrow\partial\Om}u(z)=0,\ \int_{\Om}(dd^cu)^m\wedge\omega^{n-m}<\infty\},\\
\F_m(\Om)&=\{u\in\SHm^-(\Om):\exists \Eom(\Om)\ni u_j\downarrow u,\sup_j \int_{\Om}(dd^cu_j)^m\wedge\omega^{n-m}<\infty\},\\
\Em(\Om)&=\{u\in\SHm^-(\Om):\forall G\Subset\Om,\ \exists u_G\in\F_m(\Om)\text{ such that }u=u_G\text{ on }G\}.
\end{align*}
Then the complex $m-$Hessian operator can be defined on the class $\Em(\Om)$. Moreover, this is the largest subset of non-positive $m$-subharmonic functions defined on $\Om$ for which the complex $m-$Hessian operator can be continuously extended. The reader is also referred to \cite{DBH} for another solid development of $m-$Hessian operator.

Our work is inspired partly by some recent results in \cite{NVT18} where the author characterizes the classes $\Em$, $\F_m$ in terms of the $m-$capacity of sublevel sets. Notice that similar result for the case of $m=n$
was obtained much earlier in  Section 3 of \cite{CKZ}.

The aim of this paper is to study behavior near boundary of {\it volume} of sublevel sets of the class $\F_m$. Our first result gives some qualitative estimates on portion near the boundary of the sublevel sets of
$u \in \F_m$.

\medskip
\noindent
\textbf{Theorem A.}
{\it Let $\Omega$ be a bounded $m-$hyperconvex domain in $\mathbb C^n,$ $\rho \in \Em(\Om)$ and $u \in \Fm (\Om).$ For $\ve, \de>0$ we
set $$\Om_{u,\ve,\de}:=\{z \in \Om: u(z)<-\ve, \rho(z)>-\de\}.$$
Then we have the following estimates:

\n 	
(a)
$\int\limits_{\Om_{u,\ve,\de}} (dd^c \rho)^m\wedge\omega^{n-m} \le \big (\fr{\de}{\ve} \big)^m\int\limits_{\Om} (dd^c u)^m\wedge\omega^{n-m}.$
	
\n 
(b) ${\big (\fr{m}{m+1}}\big )^{m+1}\int\limits_{\Om_{u,\ve,\de}} d\rho \wedge d^c \rho \wedge (dd^c \rho)^{m-1}\wedge\omega^{n-m} \le \de \big (\fr{\de}{\ve} \big)^m \int\limits_{\Om} (dd^c u)^m\wedge\omega^{n-m}$, if $\rho$ is locally bounded.}

\n 
The proof of Theorem A uses a version of a classical comparison principle due to Bedford and Taylor in \cite{BT} but for $m-$subharmonic functions, and of course the structure of Cegrell's classes that involved.
Under stronger convexity assumptions on $\Om$ we are able to derive upper bounds for 
volume of $\Om_{u,\ve,\de}$ that depend on $\ve, \de$ and the total $m-$ Hessian measure of $u$ (cf. Corollary 3.2 and Corollary 3.3)

\n
Using the same technique and a subextension result for $m-$ subharmonic functions coupled with a symmetrization trick, we prove the second main result which estimates the volumes of the sublevel sets near certain boundary points of $\Om.$ 

\n
\noindent
\textbf{Theorem B.}
{\it Let $\Om$ and $u$ be as in Theorem A and $\xi \in \partial \Om.$ Let $\eta \in \mathbb C^n$ be a point such that
$$\vert \xi-\eta\vert=d(\eta):= \sup \{\vert z-\eta \vert: z \in \overline{\Om}\}.$$
Then for all $\de \in (0,d(\eta))$ and $t>0$ we have
$$\begin{aligned}
&\vol_{2n} \{z \in \Om: u(z)<-t, d(\eta)-\delta <\vert z-\eta \vert <d(\eta)\}\\
&\le a_n d^{2n-2}\big (\int\limits_{\Om} (dd^c u)^m\wedge\omega^{n-m}\big )^{1/m}
\fr{\de^2}{t},
\end{aligned}$$
where $d$ is the diameter of $\Om$ nd $a_n>0$ is a constant depending only on $n.$}
\begin{remark}
For a given $\xi$, there may exists no point $\eta \in \partial \Om$ such that $\vert \xi-\eta \vert=d(\eta).$ Indeed, any point $\xi$ in the inner sphere of the annulus
$\{r<\vert z\vert<1\}$ $(r \in (0,1))$ does not have this property.
\end{remark}

In case $\Omega$ is the unit ball $\B^{n}$ in $\mathbb C^n$, by taking $\xi$ to be an arbitrary point in 
$\partial \B^n$ and letting $\eta$ be the origin in Theorem B, we obtain  
the following result.

\medskip	
\noindent
\textbf{Corollary C.}
{\it Let $u\in\F_m(\B^{n})$. Then there exists $C>0$ such that for $A>0$ we have
$$\limsup\limits_{\de \rightarrow0^+}\frac{\vol_{2n} \{z \in \B^{n}: u(z)<-A\de,\ \|z\|>1-\de\}}{\de}<\frac{C}{A}.$$
}
\n 
Observe that the above result in the case $m=n$ was proved in Theorem 5 in \cite{DD19}.
Our next main result is a sufficient condition for membership of the class $\F_m$ in the case when $\Om$ admits
a nice defining $m-$subharmonic function.

\noindent
\textbf{Theorem D.}
{\it Let $\Omega$ be a bounded $m-$hyperconvex domain in $\mathbb C^n$ that admits a negative $m-$subharmonic exhaustion function $\rho$ which is $\mathcal C^1-$smooth on a \nhd\ of $\partial \Om$ and satisfies $d\rho \ne 0$ on $\partial \Om.$ Let $u\in SH_m^{-} (\Omega)$ be such that there exist $A,C>0$ and $\alpha>2n$ satisfying $$\vol_{2n}(\{z\in\Omega: \text{d}(z,\partial\Omega)<\de,\ u(z)<-A\de\})\leq C \de^\alpha,$$
for all $\ve>0$ small enough. Then $u\in\F_m(\Omega)$.}

\n 
The proof proceeds roughly as follows. First by averaging $u$ over small balls, we may approximate $u$ from above by a sequence $u_\ve$ of $m-$subharmonic functions defined on slightly smaller domains than $\Om.$ Then, by the assumptions of the theorem we can glue each $u_\ve$ with a suitable defining function for $\Om$ to obtain an element in $\E^-_m (\Om)$ with uniform upper bound of the total complex $m-$Hessian measures.

\n
Our last result focuses again on the special case when $\Omega$ is the unit ball in $\mathbb C^n$. 

\n 
\textbf{Theorem E.}
{\it Let $u\in \SHm^-(\mathbb{B}^{n})$. Assume that there exists $A>0$ such that
\begin{equation}\label{eq main3}
\lim\limits_{\de\to 0^+}\dfrac{\vol_{2n}(\{z\in\mathbb{B}^{n} : \|z\|>1-\de, u(z)<-A\de\})}{\de}=0.
\end{equation}
Then $u\in \Fm(\mathbb{B}^{n})$.}

\n
The proof is a slightly expanded version of that of Theorem 5 in \cite{DD19} where the same statement is proved
when $m=n$. The main step of our proof is to approximate from above $u$ by a collection of $m-$subharmonic
$u_{a,\ve}$ which lives on slightly smaller balls. The function $u_{a,\ve}$ is constructed by taking upper envelopes of a family generated by $u$ and a sequence of rotations.
Next, as in the proof of Theorem D, we will exploit the assumption on the volume decay of $u<-A\de$ near the boundary to get a lower estimate of $u_{a,\ve}$ in terms of some defining function for $\B^n.$ Then we will glue these data together to obtain a sequence in $\E^0_m (\Om)$ that approximate $u$ "correctly".

\medskip
\noindent
{\bf Acknowledgments}. The first named author is supported by Vietnam National Foundation for Science and Technology Development (NAFOSTED) under grant number 101.02-2019.304. 
The second named author would like to thank IMU and TWAS for supporting his PhD studies through the IMU Breakout Graduate Fellowship.
\section{Preliminaries}
In this  short section, we will review some basic technical tools that will be used in our work.
\subsection{$m$-complex Hessian measure}

Let $u$ be a locally bounded $m-$subharmonic function defined on a domain $\Om$ in $\mathbb C^n.$ Then, following Bedford and Taylor in [1], by induction we may define the $m-$complex Hessian measure of $u$ as
$$(dd^c u)^m\wedge \om^{n-m}:= dd^c (u (dd^c u)^{m-1} \wedge \om^{n-m+1}).$$
A natural problem is to define the largest subset of $SH_m^{-} (\Om)$ on which the above operator is well defined and enjoy  
the continuity property under monotone convergence. This results in the introduction of the classes $\E_m (\Om)$ and $\F_m (\Om)$ mentioned at the beginning of our article. A major tool in studying $m-$complex Hessian measures
is the following comparison principle.
\begin{proposition}
Let $u,v$ be locally bounded $m-$subharmonic function on a bounded domain $\Om$ in $\mathbb C^n.$
Suppose that 
$\liminf\limits_{z \to \partial \Om}\ (u(z)-v(z)) \ge 0.$ Then we have
$$\int\limits_{\{u<v\}} (dd^c u)^m \wedge \om^{n-m} \ge \int\limits_{\{u<v\}} (dd^c v)^m \wedge \om^{n-m}.$$
\end{proposition}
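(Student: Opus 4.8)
The plan is to adapt the classical Bedford--Taylor argument, exploiting the boundary hypothesis to reduce everything to relatively compact sublevel sets on which integration by parts is available. First I would fix $\ve>0$ and observe that, since $\liminf_{z\to\partial\Om}(u-v)\ge 0$, in a \nhd\ of $\partial\Om$ one has $u>v-\ve$; hence the set $\{u\le v-\ve\}$ is relatively compact in $\Om$. It then suffices to establish
\[
\int_{\{u<v-\ve\}}(dd^c v)^m\wedge\om^{n-m}\ \le\ \int_{\{u<v-\ve\}}(dd^c u)^m\wedge\om^{n-m}
\]
for $\ve$ ranging over a sequence tending to $0$, because the sets $\{u<v-\ve\}$ increase to $\{u<v\}$ as $\ve\downarrow0$ and both $m$-Hessian measures are Radon, so the claimed inequality follows by continuity from below.

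The heart of the matter is a mass--conservation identity. Set $w:=\max(u,v-\ve)$, which is again locally bounded and $m$-subharmonic and equals $u$ in a \nhd\ of $\partial\Om$. Choosing a smooth subdomain $\Om'\Subset\Om$ with $\{u\le v-\ve\}\Subset\Om'$ and $w=u$ near $\partial\Om'$, I would prove
\[
\int_{\Om'}(dd^c w)^m\wedge\om^{n-m}=\int_{\Om'}(dd^c u)^m\wedge\om^{n-m}.
\]
This follows by writing the difference of the integrands as $dd^c(w-u)\wedge T$, where $T=\sum_{j=0}^{m-1}(dd^c w)^j\wedge(dd^c u)^{m-1-j}\wedge\om^{n-m}$ is a closed positive current, and integrating by parts; the boundary contributions vanish since $w-u$ is supported in $\Om'$. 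Because the $m$-Hessian measure is local, $(dd^c w)^m\wedge\om^{n-m}$ agrees with $(dd^c u)^m\wedge\om^{n-m}$ on the open set $\{u>v-\ve\}$ and with $(dd^c v)^m\wedge\om^{n-m}$ on $\{u<v-\ve\}$. Combining this locality, positivity of the measures, and the identity above yields
\[
\int_{\{u<v-\ve\}}(dd^c v)^m\wedge\om^{n-m}\ \le\ \int_{\{u<v-\ve\}}(dd^c u)^m\wedge\om^{n-m}+\int_{\{u=v-\ve\}}(dd^c u)^m\wedge\om^{n-m}.
\]

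Finally I would discard the contact-set term. The sets $\{u=v-\ve\}$ are pairwise disjoint for distinct values of $\ve$, so only countably many of them can carry positive $(dd^c u)^m\wedge\om^{n-m}$-mass; hence there is a sequence $\ve_j\downarrow0$ along which this term vanishes, and for such $\ve_j$ the displayed estimate is exactly the inequality required in the first paragraph. Letting $j\to\infty$ and using $\{u<v-\ve_j\}\uparrow\{u<v\}$ then completes the proof. The step I expect to be the main obstacle is the mass--conservation identity: one must justify both the multilinear expansion into closed positive currents and the integration by parts in the merely locally bounded $m$-subharmonic setting, which rests on the Bedford--Taylor--type theory for $m$-subharmonic functions developed by Blocki and Lu.
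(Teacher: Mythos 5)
The paper gives no written proof of this proposition --- it only asserts that the argument of Theorem 4.1 in \cite{BT} carries over verbatim --- so what you are really being asked to reconstruct is Bedford--Taylor's proof. Your skeleton does match it: the reduction to the relatively compact sets $\{u\le v-\ve\}$, the Stokes/mass-conservation identity for $w=\max(u,v-\ve)$ on a subdomain $\Om'$ where $w=u$ near $\partial\Om'$, locality of the Hessian operator, and the exhaustion $\{u<v-\ve\}\uparrow\{u<v\}$ are all the right ingredients, and the final limiting step is fine.

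There is, however, a genuine gap at the locality step. You treat $\{u>v-\ve\}$ and $\{u<v-\ve\}$ as open sets, but $u$ and $v$ are only assumed locally bounded and $m$-subharmonic, hence merely upper semicontinuous; $u-v$ is then neither usc nor lsc, and neither set need be open. Locality only identifies $(dd^cw)^m\wedge\om^{n-m}$ with $(dd^cu)^m\wedge\om^{n-m}$ on $\interior\{u\ge v-\ve\}$ and with $(dd^cv)^m\wedge\om^{n-m}$ on $\interior\{u\le v-\ve\}$. The leftover set $\Om'\setminus\bigl(\interior\{u\ge v-\ve\}\cup\interior\{u\le v-\ve\}\bigr)$ contains, but is in general much larger than, the contact set $\{u=v-\ve\}$ --- it can even have positive Lebesgue measure --- and your countability trick does not dispose of it, because these larger leftover sets for different values of $\ve$ are not pairwise disjoint. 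Consequently your key displayed inequality before the last paragraph is not justified as written (what the argument actually yields is an inequality between $\int_{\interior\{u\le v-\ve\}}(dd^cv)^m\wedge\om^{n-m}$ and the mass of $(dd^cu)^m\wedge\om^{n-m}$ on the closure of $\{u<v-\ve\}$, which is too weak on both sides). Closing this gap is precisely the technical content of Theorem 4.1 in \cite{BT}: one first proves the inequality for \emph{continuous} $u,v$ (where your argument is correct), then passes to general locally bounded ones by decreasing approximation on relatively compact subdomains combined with quasicontinuity of $m$-subharmonic functions and the monotone convergence theorems for the Hessian operator. The step you flag as the main obstacle --- the integration by parts --- is by comparison routine; the discontinuity issue is the one that costs the real work.
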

\n 
The above result can be proved exactly in the same way as Theorem 4.1 in \cite{BT} where the case $m=n$ is treated. So it will be referred to naturally as Bedford-Taylor's comparison principle.
A main consequence of this principle is the 
following useful fact that compares total complex $m-$Hessian masses of elements in $\F_m (\Om).$ 
\begin{lemma}\label{compa NP}
Let $\Omega$ be a bounded $m-$hyperconvex domain in $\mathbb C^n$ and $u,v\in\Fm(\Omega)$. Suppose that $u\geq v$ on a small \nhd\ of  $\partial\Omega$. Then
$$\int\limits_\Omega (dd^cu)^m\wedge\omega^{n-m}\leq\int\limits_\Omega (dd^cv)^m\wedge\omega^{n-m}.$$
\end{lemma}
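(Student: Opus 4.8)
The plan is to compare the masses of $u$ and $v$ through the auxiliary function $w:=\max(u,v)$, which again belongs to $\Fm(\Om)$. The whole argument then splits into two facts: first, that $w$ and $u$ carry the same total mass because they coincide near $\partial\Om$, and second, that $w$ has total mass no larger than $v$ because $w\ge v$ throughout $\Om$. Granting these, the chain $\int_\Om(dd^cu)^m\wedge\om^{n-m}=\int_\Om(dd^cw)^m\wedge\om^{n-m}\le\int_\Om(dd^cv)^m\wedge\om^{n-m}$ is exactly the assertion of the lemma. Note that by hypothesis $u\ge v$ on a \nhd\ $U$ of $\partial\Om$, so $w=u$ on $U$; this is what makes the first fact applicable.

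For the first fact I would argue by truncation. Set $u_j:=\max(u,-j)$ and $w_j:=\max(w,-j)$, which are bounded $m$-subharmonic functions decreasing to $u$ and $w$. Since $u=w$ on $U$, also $u_j=w_j$ on $U$, so the difference $u_j-w_j$ is supported in the compact set $\Om\setminus U\Subset\Om$. Expanding the difference of the two total masses by the telescoping identity
$$(dd^cu_j)^m-(dd^cw_j)^m=dd^c(u_j-w_j)\wedge\sum_{k=0}^{m-1}(dd^cu_j)^k\wedge(dd^cw_j)^{m-1-k}$$
and integrating by parts twice against $\om^{n-m}$ — which is legitimate precisely because $u_j-w_j$ has compact support and the positive current it is wedged with is closed — yields $\int_\Om(dd^cu_j)^m\wedge\om^{n-m}=\int_\Om(dd^cw_j)^m\wedge\om^{n-m}$ for every $j$. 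Letting $j\to\infty$ and using that for functions of $\Fm(\Om)$ the total masses of the truncations converge to the total mass of the limit, I obtain the desired equality $\int_\Om(dd^cu)^m\wedge\om^{n-m}=\int_\Om(dd^cw)^m\wedge\om^{n-m}$.

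For the second fact I would pass instead through the defining sequences. Choose $\Eom(\Om)\ni u_k\downarrow u$ and $\Eom(\Om)\ni v_k\downarrow v$ with uniformly bounded masses, and set $w_k:=\max(u_k,v_k)\in\Eom(\Om)$. Then $w_k\downarrow\max(u,v)=w$, so $(w_k)$ is an admissible defining sequence for $w$, while at the same time $w_k\ge v_k$ on all of $\Om$ and both functions vanish on $\partial\Om$. For such a pair of functions in $\Eom(\Om)$ the comparison principle (Proposition 2.1) yields the standard monotonicity of total mass, $\int_\Om(dd^cw_k)^m\wedge\om^{n-m}\le\int_\Om(dd^cv_k)^m\wedge\om^{n-m}$; letting $k\to\infty$ and again invoking convergence of masses along defining sequences gives $\int_\Om(dd^cw)^m\wedge\om^{n-m}\le\int_\Om(dd^cv)^m\wedge\om^{n-m}$.

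The delicate point, and the reason the two facts must be treated by different approximations, is the role of boundary values. The monotonicity of total mass used in the second step fails without the assumption that the two functions share the boundary value $0$, so one cannot simply truncate $u$ and $v$; this forces the detour through the $\Eom$ defining sequences, where the replacement of $u_k$ by $\max(u_k,v_k)$ both restores the global inequality and keeps the sequence admissible for $w$. By contrast, the first step works only because the difference is compactly supported, which removes all boundary terms from the integration by parts. I would expect verifying these two approximations — in particular the convergence of the total masses, which rests on the fact that no mass of elements of $\Fm(\Om)$ escapes to the boundary — to be the main technical content.
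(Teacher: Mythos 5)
Your argument is correct, but it follows a genuinely different route from the paper's (very terse) proof. The paper reduces to $u,v\in\Eom(\Om)$, applies the comparison principle directly to the pair $u,\la v$ with $\la>1$, lets $\la\to 1$, and then passes to $\Fm(\Om)$ via the defining sequences; note that this direct application only controls the mass of $(dd^cu)^m\wedge\om^{n-m}$ on $\{\la v<u\}$, so the mass sitting on the interior set where $u\le \la v$ is not obviously accounted for when the ordering $u\ge v$ is assumed only near $\partial\Om$ --- your reduction through $w=\max(u,v)$ is precisely the device that repairs this, and in that sense your write-up is more careful than the sketch in the paper. What you do differently is to split the ``ordered near the boundary'' hypothesis into two cleanly separated mechanisms: equality outside a compact set gives \emph{equality} of total masses by a pure Stokes/telescoping argument on the truncations $\max(\cdot,-j)$ (no comparison principle at all), while global ordering $w\ge v$ gives the mass inequality by the standard $\la$-trick on the $\Eom$ defining sequences --- which is exactly the ingredient the paper invokes, so the two proofs ultimately share that step. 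The price of your route is that you must justify two limit passages (convergence of total masses along truncations and along defining sequences, both standard for $\Fm$ since no mass escapes to the boundary), and you should note that the ``standard monotonicity of total mass'' you cite for $w_k\ge v_k$ in $\Eom(\Om)$ is itself proved by applying Proposition 2.1 to $\la v_k$ and $w_k$ and letting $\la\downarrow 1$; spelling that out would make the proof fully self-contained. One cosmetic point: the identity you use is a single application of Stokes' theorem to the compactly supported current $dd^c\bigl((u_j-w_j)\,T\wedge\om^{n-m}\bigr)$ rather than ``integrating by parts twice,'' but the computation is the same.
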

\begin{proof}
We first consider the case when $u, v \in \E^0_m (\Om)$. Then the result can be proved by applying Proposition 2.1 to $u, \la v$ with $\la>1$ and then by letting $\la \to 1$ we reach the desired estimate. The general case can be proved by looking at the definition of $\Fm (\Om)$ as was done in the case $m=n.$
\end{proof}	
\n
More subtle aspect of $m-$subharmonic functions lies in their subextension property. 
Indeed, using the solvability of the complex $m-$Hessian equation, we have the following result about subextension of $m-$subharmonic. 
The proof follows closely the lines of \cite{CZ} where a similar statement was proved for plurisubharmonic functions. 
\begin{theorem}[\cite{HD}]\label{sub}
Let $\Omega\subset\widetilde{\Omega}\subset\mathbb C^n$ be bounded $m-$hyperconvex domains and $u\in \Fm(\Omega)$. Then,
there exists $v\in \Fm(\widetilde{\Omega})$  such that $v \leq u$ on $\Omega$ and
$$(dd^cv)^m\wedge\omega^{n-m}=1_\Omega(dd^cu)^m\wedge\omega^{n-m}\text{ on }\widetilde{\Omega}.$$
\end{theorem}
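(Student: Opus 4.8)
The plan is to realize $v$ as the solution of an $m$-Hessian Dirichlet problem on $\widetilde{\Om}$ whose data is the trivial extension of the Hessian measure of $u$, and then to force the domination $v\le u$ by the comparison principle. First I would record that the measure $\mu:=1_\Om (dd^cu)^m\wedge\om^{n-m}$, viewed on $\widetilde{\Om}$, has finite total mass $\int_\Om (dd^cu)^m\wedge\om^{n-m}<\infty$ because $u\in\Fm(\Om)$, and that it puts no mass on $m$-polar sets (a standard property of the Hessian measures of functions in $\Fm$). Choosing a decreasing sequence $\Eom(\Om)\ni u_j\downarrow u$ with uniformly bounded masses, I would work with the approximate data $\mu_j:=1_\Om (dd^cu_j)^m\wedge\om^{n-m}$, which again have uniformly bounded mass on $\widetilde{\Om}$.

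Next, invoking the solvability of the complex $m$-Hessian equation in the Cegrell classes on the $m$-hyperconvex domain $\widetilde{\Om}$ (the $m$-analogue of Cegrell's existence theorem, cf. \cite{Lu15}), I would produce $v_j\in\Eom(\widetilde{\Om})$ with $(dd^cv_j)^m\wedge\om^{n-m}=\mu_j$. The uniform mass bound $\int_{\widetilde{\Om}}(dd^cv_j)^m\wedge\om^{n-m}=\int_\Om (dd^cu_j)^m\wedge\om^{n-m}\le\sup_j\int_\Om(dd^cu_j)^m\wedge\om^{n-m}<\infty$ is exactly what is needed to keep the eventual limit inside $\Fm(\widetilde{\Om})$.

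The heart of the argument is the domination $v_j\le u_j$ on $\Om$, and this is the step I expect to be the main obstacle. On $\Om$ the two Hessian measures coincide, $(dd^cv_j)^m\wedge\om^{n-m}=(dd^cu_j)^m\wedge\om^{n-m}$, so a naive application of the comparison principle (Proposition 2.1) is inconclusive: it returns an equality rather than the desired inequality. To break this degeneracy I would introduce a strictly positive perturbation: fix $\psi\in\Eom(\Om)$ whose Hessian measure $(dd^c\psi)^m\wedge\om^{n-m}$ is bounded below by a positive multiple of Lebesgue measure, and compare $u_j$ with $v_j+\ve\psi$ for $\ve>0$. Using the superadditivity $(dd^c(v_j+\ve\psi))^m\wedge\om^{n-m}\ge (dd^cv_j)^m\wedge\om^{n-m}+\ve^m(dd^c\psi)^m\wedge\om^{n-m}$ together with the boundary behaviour $u_j\to 0$ on $\partial\Om$ and $v_j+\ve\psi\le 0$, the comparison principle yields $\int_{\{u_j<v_j+\ve\psi\}}(dd^c\psi)^m\wedge\om^{n-m}\le 0$, whence $\{u_j<v_j+\ve\psi\}$ is Lebesgue-null and therefore empty. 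Letting $\ve\to 0^+$ gives $v_j\le u_j$ on $\Om$. Here the delicate points are to justify the comparison principle for the (a priori only) $m$-subharmonic restriction of $v_j$ to $\Om$ and to control its boundary values along $\partial\Om$, which may meet $\partial\widetilde{\Om}$.

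Finally I would pass to the limit. After replacing $\{v_j\}$ by a decreasing sequence (or extracting a suitable limit from the uniform mass bound via a Cegrell-type convergence lemma), the limit $v:=\lim_j v_j$ lies in $\Fm(\widetilde{\Om})$, satisfies $v\le \lim_j u_j=u$ on $\Om$, and by the continuity of the $m$-Hessian operator along decreasing sequences one obtains $(dd^cv)^m\wedge\om^{n-m}=\lim_j\mu_j=1_\Om(dd^cu)^m\wedge\om^{n-m}$ on $\widetilde{\Om}$, which is the asserted identity.
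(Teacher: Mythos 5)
First, note that the paper does not prove this statement at all: Theorem~2.3 is quoted from \cite{HD}, with only the remark that the proof ``follows closely the lines of \cite{CZ}''. Your overall strategy --- solve the $m$-Hessian Dirichlet problem on $\widetilde{\Om}$ with datum $1_\Om(dd^cu)^m\wedge\om^{n-m}$ and force $v\le u$ by comparison --- is indeed the standard route taken in that literature, and your perturbation argument for the domination step is correct and complete: with $\psi$ a bounded negative $m$-subharmonic function whose Hessian dominates a multiple of Lebesgue measure (e.g.\ $|z|^2-R^2$; it need not lie in $\Eom(\Om)$, since $u_j\to 0$ on $\partial\Om$ and $v_j+\ve\psi\le 0$ already give the boundary hypothesis of Proposition~2.1), superadditivity plus cancellation of the finite equal masses yields $\vol_{2n}\{u_j<v_j+\ve\psi\}=0$, hence $u_j\ge v_j+\ve\psi$ everywhere, and $\ve\to 0$ finishes. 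The ``delicate points'' you flag there are in fact harmless.

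There are, however, two genuine problems. First, your opening claim that $(dd^cu)^m\wedge\om^{n-m}$ ``puts no mass on $m$-polar sets'' for $u\in\Fm(\Om)$ is false: already for $m=n$ the function $\log|z|$ lies in $\F(\B^n)$ and its Monge--Amp\`ere measure is a Dirac mass at the origin. This does not derail your scheme (you only solve for the measures $\mu_j$ of the \emph{bounded} approximants $u_j$, which do vanish on $m$-polar sets), but it does block the shortcut of solving directly for the limit measure $\mu$, since solvability for measures charging $m$-polar sets requires a subsolution --- which is essentially the subextension one is trying to build. Second, and more seriously, the limit passage is not closed. The solutions $v_j$ have no reason to be monotone (the measures $\mu_j$ are not ordered), and once you replace them by $\tilde v_j:=(\sup_{k\ge j}v_k)^*$ to obtain a decreasing sequence, the identity $(dd^c\tilde v_j)^m\wedge\om^{n-m}=\mu_j$ is lost, so ``continuity along decreasing sequences'' no longer identifies the limit measure with $\lim_j\mu_j=1_\Om(dd^cu)^m\wedge\om^{n-m}$. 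Recovering the exact equality of measures --- which is precisely the nontrivial content of the theorem as opposed to the mere existence of a subextension with a mass bound --- requires an additional ingredient: either a stability/uniqueness theorem for the $m$-Hessian equation giving convergence of $v_j$ to the solution for $\mu$, or the two-step argument of \cite{CKZ}/\cite{HD} in which one first produces a subextension with $\int_{\widetilde\Om}(dd^cv)^m\wedge\om^{n-m}\le\int_\Om(dd^cu)^m\wedge\om^{n-m}$ and then uses it as a subsolution to solve for the exact measure. As written, your final paragraph asserts the conclusion without supplying this step.
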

\subsection{The averaging lemma}
The aim of this subsection is to introduce a device that creates elements in Cegrell's classes by integrating with parameters a family of $m-$subharmonic functions. 
We start with a somewhat standard lemma that relaxing the pointwise convergence condition in the definition of  $\F_m (\Om)$ to almost everywhere (a.e.) convergence.
\begin{lemma}\label{ae}
Let $\Omega$ be a $m-$hyperconvex domain in $\mathbb C^n$ and $u\in \SHm^-(\Omega)$. Assume that there exists a sequence
$\{u_j\}\in\Fm (\Omega)$ such that $u_j$ converges a.e. to
$u$ and
$$\sup\limits_{j>0}\int\limits_{\Omega}(dd^cu_j)^m\wedge\omega^{n-m}<\infty.$$ 
Then $u\in\Fm(\Omega)$.
\end{lemma}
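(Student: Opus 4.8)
The plan is to upgrade the hypothesised a.e.\ convergence into genuine monotone (decreasing) convergence by passing to regularised suprema, while using the comparison principle to keep the total Hessian masses uniformly bounded. Set $M:=\sup_j\int_\Om (dd^cu_j)^m\wedge\om^{n-m}<\infty$ and define, for each $j$,
$$v_j:=\Big(\sup_{k\ge j}u_k\Big)^*,$$
the upper semicontinuous regularisation of the supremum. Since each $u_k\in\SHm^-(\Om)$ and the family is locally bounded above by $0$, we have $v_j\in\SHm^-(\Om)$, and $(v_j)$ is decreasing in $j$. By the standard negligibility theorem for $m-$subharmonic functions the regularisation alters the supremum only on a set of Lebesgue measure zero, so $v_j=\sup_{k\ge j}u_k$ a.e. As $u_k\to u$ a.e., the functions $\sup_{k\ge j}u_k$ decrease a.e.\ to $\limsup_k u_k=u$, whence $v_j\downarrow v$ for some $m-$subharmonic $v$ with $v=u$ a.e. Two $m-$subharmonic functions that agree a.e.\ coincide, so in fact $v=u$ everywhere; in particular $v_j\downarrow u$ decreasingly.

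Next I would show that $v_j\in\Fm(\Om)$ together with a uniform mass bound. By construction $v_j\ge u_j$ on all of $\Om$, and $u_j\in\Fm(\Om)$. Since it is a standard property of the Cegrell class $\Fm$ that any $\phi\in\SHm^-(\Om)$ dominating an element of $\Fm(\Om)$ again belongs to $\Fm(\Om)$, we conclude $v_j\in\Fm(\Om)$. As the inequality $v_j\ge u_j$ holds in particular on a neighbourhood of $\partial\Om$, Lemma \ref{compa NP} applies and gives
$$\int_\Om (dd^cv_j)^m\wedge\om^{n-m}\le\int_\Om (dd^cu_j)^m\wedge\om^{n-m}\le M$$
for every $j$.

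Having thus produced a decreasing sequence $\Fm(\Om)\ni v_j\downarrow u$ with $\sup_j\int_\Om (dd^cv_j)^m\wedge\om^{n-m}\le M$, the membership $u\in\Fm(\Om)$ follows from the closedness of $\Fm(\Om)$ under decreasing limits of uniformly bounded total Hessian mass. Concretely, choosing for each $j$ a sequence in $\Eom(\Om)$ decreasing to $v_j$ with controlled mass and extracting a suitable diagonal subsequence yields the $\Eom(\Om)$-approximants of $u$ required by the definition of $\Fm(\Om)$. The main obstacle is precisely the first step — converting a.e.\ convergence into monotone convergence — which the regularised supremum, combined with the negligibility theorem, resolves; once monotonicity is in hand, the mass control is an immediate consequence of the comparison principle in Lemma \ref{compa NP}, and the concluding diagonal extraction is routine in Cegrell's framework.
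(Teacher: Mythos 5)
Your proof is correct and follows essentially the same route as the paper: regularised suprema of the tails to convert a.e.\ convergence into a decreasing sequence in $\Fm(\Om)$, the negligibility theorem to identify the limit with $u$, and Lemma~\ref{compa NP} for the uniform mass bound. The only (cosmetic) difference is that the paper takes $\bigl(\sup_{k\ge j}\max\{u,u_k,j\rho\}\bigr)^*$ with an exhaustion function $\rho$ so that the approximants automatically dominate $u$ and tend to $0$ at $\partial\Om$, whereas you use the bare supremum together with the domination property of $\Fm$ and the fact that two $m$-subharmonic functions agreeing a.e.\ coincide.
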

\begin{proof}
Let $\rho \in SH^{-}_m (\Om)$ be an exhaustion function for $\Om.$ For $k \ge 1$ we set
$$\tilde u_k(z):=\sup\limits_{j\geq k} (\max\{u,u_j, k\rho\})\text{ and }\ v_k:= \tilde u_k^*.$$
Then we have the following facts about $v_k$:
\begin{itemize}
\item[(i)] $v_k\in\SHm^-(\Omega)\text{ and } v_k\geq u_k\  \forall k\geq1,$
\item[(ii)] $\{v_k\}$ is decreasing and $v_k\geq u \ \forall k\geq1,$
\item[(iii)] $v_k=\tilde u_k$ a.e. and so $v_k\downarrow u$ everywhere on $\Om,$ 
\item[(iv)]	$\lim\limits_{z \to \partial \Om} v_k (z)=0.$
\end{itemize}
Here the second assertion of (iii) follows the assumptions that $u_k \to u$ a.e. and $u \in SH_m (\Om)$.
Moreover, since $u_k\in \F_m(\Omega)$, we get $\tilde v_k\in\F_m(\Omega)$. Finally, by Lemma \ref{compa NP}, we obtain
$$C:=\sup\limits_{k}\int\limits_\Omega(dd^cu_k)^m\wedge\omega^{n-m}\geq\sup\limits_{k}\int\limits_{\Omega}(dd^cv_k)^m\wedge\omega^{n-m}.$$
Thus, $u\in\F_m(\Omega)$ as desired.
\end{proof}
The averaging lemma below is perhaps of independent interest.
\begin{lemma}\label{lem 1 main2} Let $\Omega\subset\mathbb C^n$ be a bounded $m-$hyperconvex domain and $X$ be a compact metric space equipped with a probability measure $\mu$. 
Let $u: \Omega\times X\rightarrow [-\infty, 0)$ such that
\begin{itemize}
\item [(i)] For every $a\in X$, $u(., a)\in\Fm (\Omega)$ and
\begin{center}
$\int\limits_{\Omega}(dd^cu(z, a))^m\wedge\omega^{n-m} \le M$,
\end{center}
where $M>0$ is a constant.
\item[(ii)] For every $z\in\Omega$, the function $u(z,.)$ is upper semicontinuous on $X$.
\end{itemize}
	Then the following assertions hold true:
	
\n 
(a) $\tilde{u}(z):=\int\limits_X u(z,a)d\mu(a)\in\Fm (\Omega)$.

\n 
(b) $\int\limits_{\Omega}(dd^c \tilde u)^m\wedge\omega^{n-m} \le M.$
\end{lemma}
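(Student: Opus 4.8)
The plan is to realize $\tilde u$ as an almost-everywhere decreasing limit of explicit convex combinations of the slices $u(\cdot,a)$, each lying in $\Fm(\Om)$ with total $m$-Hessian mass at most $M$, and then to quote Lemma \ref{ae}. Throughout write $H(v):=\int_\Om (dd^c v)^m\wedge\om^{n-m}$. Since $X$ is a compact metric space, I would fix a nested sequence of finite Borel partitions $\{X_i^{(j)}\}_i$ of $X$ with mesh tending to $0$, put $w_i^{(j)}(z):=\sup_{a\in X_i^{(j)}}u(z,a)$, and set $S_j:=\sum_i \mu(X_i^{(j)})\,(w_i^{(j)})^*$, the star denoting the upper semicontinuous regularization in $z$. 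Each $w_i^{(j)}$ is a supremum of the $m$-subharmonic functions $u(\cdot,a)$, so $(w_i^{(j)})^*\in\SHm^-(\Om)$; it satisfies $u(\cdot,a_0)\le (w_i^{(j)})^*\le 0$ for any fixed $a_0\in X_i^{(j)}$, with $u(\cdot,a_0)\in\Fm(\Om)$, whence $(w_i^{(j)})^*\in\Fm(\Om)$ and, by Lemma \ref{compa NP}, $H((w_i^{(j)})^*)\le H(u(\cdot,a_0))\le M$. Being a convex combination (weights $\mu(X_i^{(j)})$ summing to $1$) of such functions, $S_j\in\Fm(\Om)$.

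The crux is the uniform bound $H(S_j)\le M$. For this I would invoke the Minkowski-type inequality for the mass functional on $\Fm(\Om)$, namely $H(v_1+v_2)^{1/m}\le H(v_1)^{1/m}+H(v_2)^{1/m}$ (equivalently, the H\"older estimate for mixed $m$-Hessian masses), together with the homogeneity $H(cv)=c^m H(v)$. Applied to the convex combination defining $S_j$ these give
$$H(S_j)^{1/m}\le \sum_i \mu(X_i^{(j)})\,H\big((w_i^{(j)})^*\big)^{1/m}\le \sum_i \mu(X_i^{(j)})\,M^{1/m}=M^{1/m},$$
so that $H(S_j)\le M$ for every $j$.

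For convergence, fix $z$ and consider $f(a):=u(z,a)\le 0$, which is upper semicontinuous on the compact $X$. As the mesh shrinks, the upper step functions $g_j(a):=\sup_{X_i^{(j)}\ni a} f$ decrease to $f(a)$ (upper semicontinuity forces $\limsup_{a'\to a}f(a')\le f(a)\le g_j(a)$), so by monotone convergence on the probability space $X$ the upper sums $\int_X g_j\,d\mu=\sum_i \mu(X_i^{(j)})\sup_{X_i^{(j)}}f$ decrease to $\int_X f\,d\mu=\tilde u(z)$. Since $(w_i^{(j)})^*=w_i^{(j)}$ off a negligible set, $S_j$ agrees almost everywhere with $\sum_i \mu(X_i^{(j)})w_i^{(j)}$, hence $S_j\to\tilde u$ a.e.; reading off sub-mean-value averages upgrades the a.e. monotonicity to $S_j\downarrow\bar u$ everywhere for some $\bar u\in\SHm^-(\Om)$ with $\bar u=\tilde u$ a.e. (the limit is not $\equiv-\infty$ since Fubini and the mass bound give $\tilde u\in L^1_{loc}$). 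Now $\{S_j\}\subset\Fm(\Om)$ converges a.e. to $\bar u\in\SHm^-(\Om)$ with $\sup_j H(S_j)\le M$, so Lemma \ref{ae} yields $\bar u\in\Fm(\Om)$; identifying $\tilde u$ with its $m$-subharmonic representative $\bar u$ gives (a). For (b), since $S_j\downarrow\tilde u$ in $\Fm(\Om)$ with uniformly bounded masses, the $m$-Hessian measures converge weakly and the total mass is lower semicontinuous, so $H(\tilde u)\le\liminf_j H(S_j)\le M$.

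The main obstacle is the uniform mass bound of the second paragraph: expanding $(dd^c S_j)^m$ produces all mixed Hessian terms, and controlling them requires the mixed-mass H\"older (equivalently Minkowski) inequality on $\Fm(\Om)$, which is the one genuinely non-elementary ingredient. The secondary care points are verifying that upper semicontinuity in $a$ forces the monotone convergence of the upper sums, and the routine bookkeeping between almost-everywhere and everywhere statements for $m$-subharmonic functions together with the nondegeneracy $\tilde u\not\equiv-\infty$.
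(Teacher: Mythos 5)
Your proof is correct and follows essentially the same route as the paper's: the same Riemann-sum discretization over a fine Borel partition of $X$, upper semicontinuity in $a$ to get a.e.\ convergence of the discretized sums to $\tilde u$, the Cegrell--H\"older mixed-mass inequality (which you repackage as a Minkowski inequality for $H^{1/m}$, equivalent to the paper's multinomial expansion plus Proposition 3.3 of \cite{HP17}) for the uniform mass bound, and Lemma \ref{ae} to conclude. The only cosmetic differences are that you regularize each partial supremum and bound its mass via Lemma \ref{compa NP} before summing, whereas the paper minorizes the sum by a convex combination of slices $u(\cdot,a_{j,k})$ and applies H\"older to that.
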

\begin{proof} For each $j\ge 1$, decompose $X$ into a finite pairwise disjoint collection of Borel sets  $U_{j,1},...,U_{j,m_j}$ having diameter less than $\frac{1}{2^j}$.
Set
$$\begin{aligned} 
u_j(z):=&\sum_{k=1}^{m_j}\mu(U_{j,k})\sup_{a\in U_{j,k}} u (z,a) \\
v_j (z,a):=& \sum_{k=1}^{m_j} {\bf 1}_{U_{j,k}} (a) \sup_{b\in U_{j,k}} u (z,b).
\end{aligned}$$
We claim that $u_j$ converges pointwise to $\tilde u$ on $\Om.$ Indeed, since $\mu$ is a probability measure we infer that
$u_j \ge \tilde u$ for every $j.$ On the other hand, for any fixed $z \in \Om,$ using the assumption (ii) and then Fatou's lemma, we obtain
$$\begin{aligned} 
\tilde u (z)&=\int\limits_X u(z,a)d\mu(a)\\
&\ge \int\limits_X \limsup\limits_{j \to \infty} v_j (z,a) d\mu(a)\\
&\ge \limsup\limits_{j \to \infty} \int\limits_X v_j (z,a) d\mu(a)=\limsup\limits_{j \to \infty} u_j (z).
\end{aligned}$$
Thus, we have indeed $u_j \to \tilde u$ pointwise on $\Om$ as claimed.
So $u_j^* \to \tilde u$ a.e. on $\Om$ since $u_j^*=u_j$ a.e. on $\Om.$
It now remains to bound the complex $m-$Hessian measures of $u_j^*.$ For this,
we choose $a_{j,k} \in U_{j,k}$ for $1 \le k \le m_j.$ Then
$$u_j^* \ge u_j \ge \tilde u_j:=\sum_{k=1}^{m_j}\mu(U_{j,k}) u(\cdot, a_{j,k}) \ \forall j \ge 1.$$
Since $\F_m (\Om)$ is a convex cone, we infer that $\tilde u_j\in\F_m(\Omega)$, and hence  $u_j^*\in\F_m(\Omega)$. Moreover, by Lemma 2.2 we obtain, for $j \ge 1$,
$$\begin{aligned} 
&\int\limits_{\Omega}(dd^c u_j^*)^m\wedge\omega^{n-m} \leq \int\limits_{\Omega}(dd^c \tilde u_j)^m\wedge\omega^{n-m}\\
=&\int\limits_\Omega\Big[\Big(\sum_{k=1}^{m_j}\mu(U_{j,k})dd^c u(z,a_{j,k})\Big)\Big]^m\wedge\omega^{n-m}\\
=&\sum_{k_1+...+k_{m_j}=m}\frac{m!}{k_1!k_2!...k_{m_j}!}\prod_{l=1}^{m_j}\mu(U_{j,l})^{k_l}\int\limits_\Omega\Big(dd^cu(z,a_{j,1})\Big)^{k_1}\wedge...
 \wedge\Big(dd^cu(z,a_{j,m_j})\Big)^{k_{m_j}}\wedge\omega^{n-m}\end{aligned}$$ 
Therefore, by appling a Cegrell-H\"older's type inequality in the fourth estimate (see Proposition 3.3 in \cite{HP17}), we have, for $j \ge 1$,
$$\begin{aligned} 
\int\limits_{\Omega}(dd^c u_j^*)^m\wedge\omega^{n-m}\leq&\sum_{k_1+...+k_{m_j}=m}\frac{m!}{k_1!k_2!...k_{m_j}!}\prod_{l=1}^{m_j}\mu(U_{j,l})^{k_l}
\Big[\int\limits_\Omega\Big(dd^cu(z,a_{j,l})\Big)^{m}\wedge\omega^{n-m}\Big]^{k_l/m}\\
\leq&M\sum_{k_1+...+k_{m_j}=m}\frac{m!}{k_1!k_2!...k_{m_j}!}\prod_{l=1}^{m_j}\mu(U_{j,l})^{k_l}\\
=&M(\sum_{l=1}^{m_j}\mu(U_{j,l}))^m=M.
\end{aligned}$$
So, by Lemma 2.4, we conclude that $u \in \F_m (\Om).$
\end{proof}
\section{Proofs of the results}
In this section we will provide detailed proofs of the results that are announced at the beginning of the article.
We first deal with Theorem A. The main technique is the classical Bedford-Taylor comparison principle and 
the structure of Cegrell classes that involved. 

\begin{proof} [Proof of Theorem A]
(a) Let  $u_j \in \Eom (\Om)$ be a sequence satisfying $u_j \downarrow u$ and
$$\lim\limits_{j \to \infty} \int\limits_\Om (dd^c u_j)^m\wedge\omega^{n-m}=\int\limits_{\Om} (dd^c u)^m\wedge\omega^{n-m}.$$
Fix an open subset $\Om' \Subset \Om$, we can find $\rho' \in \Fm(\Om)$ with $\rho'|_{\Om'}=\rho.$
Then we note the inclusion
$$\Om(u_j,\ve,\de):= \{z \in \Om: u_j(z)<-\ve, \rho'>-\de\} \subset \big \{\rho'>\fr{\de}{\ve} u_j \big \}.$$
Thus, by using Bedford-Taylor's comparison principle, we get the following chain of estimates
$$
\begin{aligned} \big (\fr{\de}{\ve}\big )^m\int\limits_{\Om} (dd^c u_j)^m\wedge\omega^{n-m} &\ge \int\limits_{\big \{\rho'>\fr{\de}{\ve} u_j \big \}} \big (\fr{\de}{\ve}\big )^m (dd^c u_j)^m\wedge\omega^{n-m}\\ 
&\ge \int\limits_{\big \{\rho>\fr{\de}{\ve} u_j \big \}} (dd^c \rho')^m\wedge\omega^{n-m}\\ 
&\ge \int\limits_{\Om(u_j,\ve,\de)} (dd^c \rho')^m\wedge\omega^{n-m}\\
&\ge \int\limits_{\Om(u_j,\ve,\de) \cap \Om'} (dd^c \rho)^m\wedge\omega^{n-m}
\end{aligned}$$
	Since $\Om (u_j,\ve,\de) \cap \Om' \uparrow \Om_{u,\ve,\de} \cap \Om'$, by letting $j \to \infty$ and then $\Om' \uparrow \Om$ we obtain the desired estimate.
	
\n 
(b) For each $a \in (0,1)$ we set
$$\rho_a:=-(-\rho)^a.$$
Then, by a direct computation, we obtain the following identity in the sense of currents
$$dd^c \rho_a= a(1-a)(-\rho)^{a-2} d\rho \wedge d^c \rho + a(-\rho)^{a-1}dd^c \rho.$$
Then $\rho_a$ is a negative locally bounded $m-$\psh\ function on $\Om$. 
Moreover,
$$(dd^c \rho_a)^m\wedge\omega^{n-m} \ge ma^m (1-a) (-\rho)^{m(a-1)-1} d\rho \wedge d^c \rho \wedge (dd^c \rho)^{m-1}\wedge\omega^{n-m}.$$
Since $0<-\rho<\de$ on $\Om_{u,\ve,\de}$, we may combine the above inequality and
the estimate in (a) to obtain
$$\begin{aligned}
&ma^m (1-a) \de^{m(a-1)-1} \int\limits_{\Om_{\ve,\de}} d\rho \wedge d^c \rho \wedge (dd^c \rho)^{m-1}\wedge\omega^{n-m}\\ 
&\le \int\limits_{\Om_{u,\ve,\de}} (dd^c \rho_a)^m\wedge\omega^{n-m}\\
&=\int\limits_{\{u<-\ve,\rho_a>-\de^a\}} (dd^c \rho_a)^m\wedge\omega^{n-m}\\
& \le  \big (\fr{\de^a}{\ve} \big)^m \int\limits_\Om (dd^c u)^m\wedge\omega^{n-m}.
\end{aligned}$$
Now our inequality follows by rearranging these estimates and taking $a=\fr{m}{m+1}$. \end{proof}
\vskip 0,2cm
\n
It is natural to ask if the following converse to Theorem A is true.
\begin{question}\label{Ques1}
Let $u$ be a negative $m-$subharmonic function on a bounded hyperconvex domain $\Om.$
Suppose that there exists $A>0$ such that for all $\ve>0, \de>0$ and for all $\rho \in \Em(\Om)$ we have
$$\int\limits_{\Om_{u,\ve,\de}} (dd^c \rho)^m\wedge\omega^{n-m} \le A\big (\fr{\de}{\ve} \big)^m.$$
Does $u$ belong to $\Fm (\Om)?$
\end{question}
\n
Theorem E is, thus, an attempt, to answer this question in the affirmative 
when $\Om$ is the unit ball in $\mathbb C^n.$
The following result follows directly from Theorem A (a).
\begin{corollary}\label{Cor2}
Let $\Om$ be a bounded $B-$regular domain, i.e., there exists a negative \psh\ exhaustion function 
$\rho$ on $\Om$ satisfying $dd^c \rho \ge \om.$ Then for all $u \in \Fm(\Om)$ we have
$$\vol_{2n} (\Om_{u,\ve,\de}) \le \fr{\de^{m}}{\ve^m} \int\limits_\Om (dd^c u)^m\wedge\omega^{n-m}.$$
\end{corollary}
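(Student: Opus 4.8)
The plan is to obtain this directly from Theorem A(a), applied to the $B$-regular exhaustion function $\rho$ itself, by converting its $m$-Hessian mass into Lebesgue volume through the lower bound $dd^c\rho\ge\om$. First I would check that $\rho$ is an admissible test function for Theorem A. Since $PSH=SH_n\subseteq\SHm$, the \psh\ exhaustion $\rho$ is $m$-subharmonic, negative and (being continuous up to $\partial\Om$) bounded; on the $m$-hyperconvex domain $\Om$ any such function belongs to $\Em(\Om)$, this being the standard fact that $\SHm^-(\Om)\cap L^\infty_{loc}(\Om)\subseteq\Em(\Om)$. Thus Theorem A(a) applies to this $\rho$ and the given $u\in\Fm(\Om)$ and yields
$$\int\limits_{\Om_{u,\ve,\de}}(dd^c\rho)^m\wedge\om^{n-m}\le\Big(\fr{\de}{\ve}\Big)^m\int\limits_\Om(dd^cu)^m\wedge\om^{n-m}.$$

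It then remains to bound the left-hand integrand below by Lebesgue measure. As $\rho$ is \psh, the current $dd^c\rho$ is positive, and the hypothesis $dd^c\rho\ge\om$ says that $T:=dd^c\rho-\om$ is positive as well. Writing $dd^c\rho=\om+T$ and expanding the $m$-th power by the multilinearity and symmetry of the Bedford--Taylor mixed Hessian measures of the locally bounded function $\rho$, I get
$$(dd^c\rho)^m\wedge\om^{n-m}=\sum_{k=0}^{m}\binom{m}{k}T^{k}\wedge\om^{n-k}\ge\om^n=\vol_{2n},$$
where every summand is a positive measure, so only the $k=0$ term is needed for the inequality, and the last equality is the normalization $\om=dd^c|z|^2$. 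If one wishes to avoid the subtleties of products of currents, the same domination $(dd^c\rho)^m\wedge\om^{n-m}\ge\om^n$ follows by smoothing $\rho$ and passing to the limit under weak convergence of Hessian measures.

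Combining the two displays gives
$$\vol_{2n}(\Om_{u,\ve,\de})=\int\limits_{\Om_{u,\ve,\de}}\om^n\le\int\limits_{\Om_{u,\ve,\de}}(dd^c\rho)^m\wedge\om^{n-m}\le\Big(\fr{\de}{\ve}\Big)^m\int\limits_\Om(dd^cu)^m\wedge\om^{n-m},$$
which is exactly the claimed estimate. I expect no real obstacle here: the entire content reduces to the pointwise domination $(dd^c\rho)^m\wedge\om^{n-m}\ge\om^n$ and the membership $\rho\in\Em(\Om)$, both routine, while the genuinely nontrivial comparison inequality has already been supplied by Theorem A(a).
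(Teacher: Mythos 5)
Your proposal is correct and is exactly the derivation the paper intends: the paper states that Corollary~\ref{Cor2} ``follows directly from Theorem A(a)'', and your argument supplies precisely the two routine ingredients — that the $B$-regular exhaustion $\rho$ is an admissible element of $\Em(\Om)$, and that $dd^c\rho\ge\om$ forces $(dd^c\rho)^m\wedge\om^{n-m}\ge\om^n$ — needed to pass from the Hessian-mass estimate of Theorem A(a) to the volume bound.
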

\n 
Notice that we are using here the notion of $B-$regular domains taken from the seminal work \cite{Sib}.
Under a stronger assumption on convexity and smoothness of $\Om$ we may refine the above estimate as follows.
\begin{corollary}\label{Cor3}
Let $\Om$ be a bounded strictly $m-$pseudoconvex domain with $C^2-$smooth boundary.
For $\de>0$ and $u \in \Fm(\Om)$ we set
$$\Om_u(\ve,\de):=\{z \in \Om: u(z)<-\ve, d(z,\partial \Om)<\de\},$$
	where $d$ is the distance function. Then there exist $\delta_0=\delta_0(\Om)>0$ and $C=C(\Om,\delta_0,n)>0$ such that for all $u \in \Fm(\Om)$, $\de\in (0,\delta_0)$ and $\ve>0$ we have
	$$\vol_{2n} (\Om_u (\ve,\de)) \le C\fr{\de^{m+1}}{\ve^m} \int\limits_\Om (dd^c u)^m\wedge\omega^{n-m}.$$
\end{corollary}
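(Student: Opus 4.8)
\n The plan is to upgrade the crude volume bound of Corollary \ref{Cor2} by trading one power of $\de$ for the gradient estimate in Theorem A (b). Since $\Om$ is strictly $m$-pseudoconvex with $C^2$ boundary, I would first fix a defining function $\rho$ for $\Om$ which is $C^2$ up to $\partial\Om$, negative inside, vanishes on $\partial\Om$ with $d\rho\neq 0$ there, is strictly $m$-subharmonic near the boundary (so that $dd^c\rho\ge c_0\om$ as $(1,1)$-forms on a one-sided \nhd\ $\{-\rho<\de_1\}$ of $\partial\Om$), and can be arranged to belong to $\Em(\Om)$ and to be locally bounded, so that Theorem A (b) is applicable. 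Two standard consequences of $C^2$ regularity and $d\rho\neq 0$ on $\partial\Om$ will be used repeatedly: the comparability $c_1\, d(z,\partial\Om)\le -\rho(z)\le c_2\, d(z,\partial\Om)$ near $\partial\Om$, and the gradient lower bound $|\partial\rho|\ge c_3>0$ there.

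\n The key pointwise estimate is a lower bound for the mixed current appearing in Theorem A (b). On $\{-\rho<\de_1\}$, writing $dd^c\rho=(dd^c\rho-c_0\om)+c_0\om$ and expanding, every mixed wedge of the positive $(1,1)$-forms $dd^c\rho-c_0\om$ and $c_0\om$ is positive, whence $(dd^c\rho)^{m-1}\wedge\om^{n-m}\ge c_0^{m-1}\om^{n-1}$. Wedging with the positive $(1,1)$-form $d\rho\wedge d^c\rho$ and using the direct computation $d\rho\wedge d^c\rho\wedge\om^{n-1}=c_n|\partial\rho|^2\,dV_{2n}$ (with $dV_{2n}$ Lebesgue measure), I obtain a constant $c_4=c_4(\Om,n)>0$ such that
$$d\rho\wedge d^c\rho\wedge(dd^c\rho)^{m-1}\wedge\om^{n-m}\ge c_4\,dV_{2n}\qquad\text{on }\{-\rho<\de_1\}.$$

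\n Next I would pass from the distance sublevel set to the $\rho$-sublevel set. By the upper comparability, $d(z,\partial\Om)<\de$ forces $-\rho(z)<c_2\de$, hence $\Om_u(\ve,\de)\subset\Om_{u,\ve,c_2\de}$, where $\Om_{u,\ve,c_2\de}=\{u<-\ve,\ \rho>-c_2\de\}$ is the set from Theorem A. Choosing $\de_0$ so small that $c_2\de_0<\de_1$ guarantees $\Om_{u,\ve,c_2\de}\subset\{-\rho<\de_1\}$ for all $\de\in(0,\de_0)$, so the pointwise bound above holds on it. Integrating that bound over $\Om_{u,\ve,c_2\de}$ and then invoking Theorem A (b) with $\de$ replaced by $c_2\de$ gives
$$c_4\,\vol_{2n}(\Om_u(\ve,\de))\le\int\limits_{\Om_{u,\ve,c_2\de}}d\rho\wedge d^c\rho\wedge(dd^c\rho)^{m-1}\wedge\om^{n-m}\le\big(\tfrac{m+1}{m}\big)^{m+1}c_2\de\big(\tfrac{c_2\de}{\ve}\big)^m\int\limits_\Om(dd^cu)^m\wedge\om^{n-m},$$
so that $C:=c_4^{-1}\big(\tfrac{m+1}{m}\big)^{m+1}c_2^{m+1}$ yields the claimed inequality.

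\n The routine ingredients are the construction of $\rho$ and the two comparability facts, all standard for $C^2$ strictly $m$-pseudoconvex domains. The step I expect to require the most care is the pointwise lower bound: one must check that $\rho$ can be chosen \emph{simultaneously} strictly $m$-subharmonic (with $dd^c\rho\ge c_0\om$) and with $|\partial\rho|$ bounded below on a full one-sided \nhd\ of $\partial\Om$ rather than merely on $\partial\Om$ itself, and that the degenerate rank-one form $d\rho\wedge d^c\rho$ does contribute the factor $|\partial\rho|^2$ after wedging with $\om^{n-1}$. Shrinking $\de_1$ and $\de_0$ if necessary absorbs the boundary behaviour.
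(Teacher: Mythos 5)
Your proposal follows essentially the same route as the paper: fix a strictly $m$-subharmonic $C^2$ defining function $\rho$, use strict $m$-positivity plus the nonvanishing gradient near $\partial\Om$ to get the pointwise bound $d\rho\wedge d^c\rho\wedge(dd^c\rho)^{m-1}\wedge\om^{n-m}\ge C'\om^n$ on a collar, and then apply Theorem A(b). Your extra step of converting $d(z,\partial\Om)<\de$ into $\rho>-c_2\de$ is a detail the paper leaves implicit, but the argument is the same.
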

\begin{proof}
	Let $\rho$ be an arbitrary strictly $m-$\psh\ functions on a \nhd\ of $\overline \Omega$ that defines $\Om.$
	Then we can find a positive constant $\de_0$ depending on $\Om$ such that
	$$d\rho\neq 0\text{ on }\{z\in\Om:\ d(z,\partial\Om)\leq\de_0\}.$$
	Thus, on $\Om$,
	$$d\rho \wedge d^c\rho \wedge (dd^c \rho)^{m-1}\wedge\omega^{n-m} \ge A d\rho \wedge d^c\rho \wedge \om^{n-1}=A\Vert \text{grad}\ \rho \Vert^2 \om^n,$$
	for some constant $A>0$. Therefore, since $\Vert \text{grad}\ \rho \Vert$ is bounded from below by a positive constant, we have, on $\{z\in\Om:\ d(z,\partial\Om)\leq\de_0\}$,
	$$d\rho \wedge d^c\rho \wedge (dd^c \rho)^{m-1}\wedge\omega^{n-m} \ge C'\om^n,$$
	for some constant $C'$. It follows that
	$$\int\limits_{\Om_u(\ve,\de)}d\rho \wedge d^c\rho \wedge (dd^c \rho)^{m-1}\wedge\omega^{n-m}\geq C'\int\limits_{\Om_u(\ve,\de)}\om^n,$$
	for all $\ve>0$ and $\de\in (0,\de_0)$. The desired estimate follows by combining this with Theorem A(b).
\end{proof}
The following question is curiously open to us.
\begin{question}\label{Ques2}
	Let $\Om$ be a $\mathbb C^2$ smooth strictly pseudoconvex. Is there a smooth defining strictly $m-$\psh\ function for $\Om$ whose gradient is non-vanishing {\it entirely} on $\Om?$
\end{question}
\n 
If the answer to the above question is positive then the constant given in Corollary~\ref{Cor2} can be chosen to be independent of $\ve_0.$
Regarding boundary behavior of $\Fm (\Om)$, we have the following result which will also be used in the proof of Proposition 3.6.
\begin{proposition}\label{Prop2}
	Let $u, \rho \in \Fm (\Om).$ Then we have
	$$\liminf_{z \to  \partial \Om} \frac{u(z)}{\rho(z)} \le M,$$
	where
	$$M:= \Big ( \frac{\int\limits_{\Om} (dd^c u)^m\wedge\omega^{n-m}}{\int\limits_{\Om} (dd^c \rho)^m\wedge\omega^{n-m}}\Big )^{1/m} \in (0,\infty).$$
\end{proposition}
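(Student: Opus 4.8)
The plan is to argue by contradiction, converting a lower bound on the ratio $u/\rho$ near $\partial\Om$ into an inequality between the total $m$-Hessian masses by means of Lemma \ref{compa NP}. Before that, I would record two preliminary facts. First, since $m$-subharmonic functions are subharmonic ($\SHm\subset SH$), the classical maximum principle applies: a nontrivial $\rho\in\SHm^-(\Om)$ on the domain $\Om$ must satisfy $\rho<0$ at every interior point, and likewise for $u$. Hence the ratio $u/\rho$ is well defined and positive on all of $\Om$, and in particular near $\partial\Om$. Second, $M\in(0,\infty)$ precisely because the two total masses are finite (this is built into membership in $\Fm$) and strictly positive (nontriviality together with the comparison/maximum principle for $\Fm$).

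Suppose toward a contradiction that $\liminf_{z\to\partial\Om}\frac{u(z)}{\rho(z)}>M$. I would then choose a constant $c$ with $M<c$ and $c$ below this $\liminf$ (any $c>M$ works if the $\liminf$ equals $+\infty$). By the very definition of $\liminf$, there exists $\eta>0$ such that $\frac{u(z)}{\rho(z)}>c$ for all $z$ with $d(z,\partial\Om)<\eta$. Since $\rho<0$, multiplying this inequality through by $\rho$ reverses its direction and yields $u\le c\rho$ on the full one-sided \nhd\ $\{z\in\Om:\ d(z,\partial\Om)<\eta\}$ of $\partial\Om$.

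Now I would invoke the cone structure and homogeneity. Because $\Fm(\Om)$ is a convex cone, $c\rho\in\Fm(\Om)$, and by homogeneity of the operator $(dd^c(c\rho))^m\wedge\omega^{n-m}=c^m(dd^c\rho)^m\wedge\omega^{n-m}$. Applying Lemma \ref{compa NP} to the pair $c\rho\ge u$, which holds on a \nhd\ of $\partial\Om$, gives
$$c^m\int\limits_\Om(dd^c\rho)^m\wedge\omega^{n-m}=\int\limits_\Om(dd^c(c\rho))^m\wedge\omega^{n-m}\le\int\limits_\Om(dd^cu)^m\wedge\omega^{n-m},$$
whence $c^m\le M^m$, that is $c\le M$, contradicting the choice $c>M$. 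This establishes the desired bound $\liminf_{z\to\partial\Om}\frac{u}{\rho}\le M$.

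The argument is short, so the points demanding care are bookkeeping ones rather than a substantive obstacle. The key is that the definition of $\liminf$ supplies the inequality $u/\rho>c$ on an \emph{entire} one-sided \nhd\ of $\partial\Om$ (not merely along some sequence approaching the boundary), which is exactly the hypothesis format that Lemma \ref{compa NP} requires; and one must track the sign of $\rho$ correctly when passing from $u/\rho>c$ to $u\le c\rho$. The genuine mathematical input is Lemma \ref{compa NP}, which compares total masses of elements of $\Fm$ that are merely ordered near the boundary, while the homogeneity of the $m$-Hessian operator under positive scaling is what converts that comparison into the ratio defining $M$.
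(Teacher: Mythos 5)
Your proof is correct and follows essentially the same route as the paper: argue by contradiction, use the definition of $\liminf$ to get $u\le c\rho$ on a one-sided \nhd\ of $\partial\Om$ for some $c>M$, and then compare total $m$-Hessian masses via Lemma \ref{compa NP} together with the homogeneity $(dd^c(c\rho))^m=c^m(dd^c\rho)^m$. The only cosmetic difference is that the paper runs the comparison through the auxiliary function $v_j:=\max\{u,(M+\tfrac1{2j})\rho\}$ and lets $j\to\infty$, whereas you apply Lemma \ref{compa NP} directly to the pair $c\rho\ge u$, which is a slightly cleaner bookkeeping of the same argument.
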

\begin{proof}
	Fix $j \ge 1$. We claim that
	$$M_j:=M+\fr1{j} \ge \liminf\limits_{z \to  \partial \Om} \frac{u(z)}{\rho(z)}.$$
	Assume the contrary holds, then we have $u \le (M+\fr1{2j}) \rho$ on a small \nhd\ of $\partial \Om.$
	Thus $$u \le v_j:=\max \{u, (M+\fr1{2j})\rho\} \in \F (\Om)$$ 
	and $v_j=(M+\fr1{2j}) \rho$ near $\partial \Om.$
	Then by the comparison principle we obtain
	\begin{align*}
	M^m \int\limits_{\Om} (dd^c \rho)^m\wedge\omega^{n-m}&=\int\limits_{\Om} (dd^c u)^m\wedge\omega^{n-m}\\
	&\ge \int\limits_{\Om} (dd^c v_j)^m\wedge\omega^{n-m}\\
	&=(M+\fr1{2j})^m 
	\int\limits_{\Om} (dd^c \rho)^m\wedge\omega^{n-m}.
	\end{align*}
	Here we used Stokes' theorem for the last equality. So we obtain a contradiction and thus the claim follows. By letting $j \to \infty$, we obtain the desired conclusion.
\end{proof}
\n
The above result can be used to characterized radial elements in $\Fm (\Om)$ when $\Om$ is a {\it ball} in $\mathbb C^n,$ a problem of independent interest.

\n 
{\bf A word of caution}: From now on we always use $a_n$ (which may change from line to line) to mean an absolute constant that depends only on $n$.
\begin{proposition}\label{Prop3}
Let $u \in \SHm^{-} (\mathbb B^n (0,r))$ be a radial function. Then the following conditions are equivalent.
	
\n 
(a) $u\in \Fm (\mathbb B^n (0,r));$
	
\n 
(b) $\sup\limits_{0 \le t<r} \frac{u(t)}{t-r} \le a_n M(r),$
where 
$$M(r):=\fr1{r}\big (\int\limits_{\mathbb B^n (0,r)} (dd^c u)^m\wedge\omega^{n-m}\big )^{1/m}.$$ 
\end{proposition}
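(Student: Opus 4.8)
The plan is to derive both implications from Proposition~\ref{Prop2} together with one explicit computation for a single radial comparison function. Put $\rho_0(z):=|z|-r$. Since $|z|$ is \psh, we have $\rho_0\in\SHm^-(\B^n(0,r))$; it is continuous up to the boundary with $\rho_0=0$ on $\partial\B^n(0,r)$, and a direct diagonalisation of its complex Hessian (for $u=\chi(\log|z|)$ the eigenvalues are $\fr{\chi'}{2|z|^2}$ with multiplicity $n-1$ and $\fr{\chi''}{4|z|^2}$ with multiplicity $1$, and here $\chi(l)=e^l$) shows that the density of $(dd^c\rho_0)^m\wedge\om^{n-m}$ is comparable to $|z|^{-m}$. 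Integrating against $|z|^{2n-1}d|z|$ gives $\int_{\B^n(0,r)}(dd^c\rho_0)^m\wedge\om^{n-m}=A_{n,m}\,r^{2n-m}<\infty$ for an explicit constant $A_{n,m}>0$, so that $\rho_0\in\Eom(\B^n(0,r))\subset\Fm(\B^n(0,r))$. This is the first key step, and it is what fixes the absolute constant $a_n$ entering (b).

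For (a)$\Rightarrow$(b), assume $u\in\Fm$ and apply Proposition~\ref{Prop2} to the pair $(u,\rho_0)$, which yields
$$\liminf_{z\to\partial\B^n(0,r)}\fr{u(z)}{\rho_0(z)}\le\Big(\fr{\int_{\B^n(0,r)}(dd^cu)^m\wedge\om^{n-m}}{\int_{\B^n(0,r)}(dd^c\rho_0)^m\wedge\om^{n-m}}\Big)^{1/m}.$$
Because both functions are radial, the quotient depends only on $t=|z|$ and equals $u(t)/(t-r)$, so the left-hand side is $\liminf_{t\to r^-}u(t)/(t-r)$, while inserting the mass of $\rho_0$ computed above rewrites the right-hand side as $a_nM(r)$. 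The remaining task is to pass from this boundary $\liminf$ to the supremum over $[0,r)$. Here I would invoke that a radial $m$-subharmonic function is nondecreasing and convex in the variable $\log|z|$ (a consequence of the eigenvalue inequalities characterising $\SHm$); this convexity controls the secant slopes and forces $t\mapsto u(t)/(t-r)$ to be governed by its behaviour as $t\to r^-$, so that the boundary value dominates and (b) follows.

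For (b)$\Rightarrow$(a), the inequality in (b) says exactly $u(t)\ge a_nM(r)\,(t-r)$, i.e. $u(z)\ge a_nM(r)\,\rho_0(z)$. Since $\rho_0\in\Eom$ and $u\le0$, the sandwich $a_nM(r)\,\rho_0\le u\le0$ both forces $\lim_{z\to\partial}u(z)=0$ and bounds $u$ from below, so $u\in\SHm^-\cap L^\infty$ with zero boundary values. Finiteness of the total mass then follows from Bedford--Taylor's comparison principle applied to this sandwich, bounding $\int_{\B^n(0,r)}(dd^cu)^m\wedge\om^{n-m}$ by $(a_nM(r))^m\int_{\B^n(0,r)}(dd^c\rho_0)^m\wedge\om^{n-m}<\infty$; hence $u\in\Eom(\B^n(0,r))\subset\Fm$. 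If one only controls the slope near the boundary, I would instead glue: set $u_j:=\max(u,j\rho_0)$, which agrees with $u$ near $\partial\B^n(0,r)$, lies in $\Eom$, decreases to $u$, and has masses that stay bounded because the extra mass created on the shrinking sets $\{u<j\rho_0\}$ tends to $0$ (the density $|z|^{-m}$ of $(dd^c\rho_0)^m\wedge\om^{n-m}$ beats the factor $j^m$ there); Lemma~\ref{ae} then gives $u\in\Fm$.

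The main obstacle is twofold. First is the explicit eigenvalue computation pinning down $\int_{\B^n(0,r)}(dd^c\rho_0)^m\wedge\om^{n-m}$ and thereby the constant $a_n$. Second, and more delicate, is the passage in (a)$\Rightarrow$(b) from the boundary $\liminf$ supplied by Proposition~\ref{Prop2} to the supremum over the whole interval: this is precisely where the radial convexity in $\log|z|$ is indispensable, and it is the step most sensitive to the exact normalisation of $M(r)$, so I would treat it with care.
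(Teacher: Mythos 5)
Your overall route is the same as the paper's: both directions rest on Proposition~\ref{Prop2} applied to an explicit radial member of $\Fm(\B^n(0,r))$, followed by a convexity argument for the radial profile. The main difference is the comparison function: you take $\rho_0(z)=|z|-r$, whereas the paper takes $\rho(z)=1-(r/|z|)^{2(n/m-1)}$, whose complex $m$-Hessian is a point mass at the origin; either choice feeds into Proposition~\ref{Prop2} and produces the boundary estimate $\liminf_{t\to r^-}u(t)/(t-r)\le a_nM(r)$ with the same $r$-dependence of the constant. Your (b)$\Rightarrow$(a) argument --- the global sandwich $a_nM(r)\,\rho_0\le u\le 0$ together with the comparison principle --- is sound and, if anything, cleaner than the paper's localisation near the boundary.

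The genuine gap is in (a)$\Rightarrow$(b), exactly at the step you flag but do not carry out: upgrading the boundary $\liminf$ to the supremum over all of $[0,r)$. Two problems. First, the convexity you invoke is false for $m<n$: a radial $m$-subharmonic function is an increasing convex function of $-|z|^{2-2n/m}$, not of $\log|z|$ (the fundamental solution $-|z|^{2-2n/m}$ is itself concave in $\log|z|$). Second, and more seriously, convexity of $u$ in a strictly concave increasing reparametrisation $g(t)$ of $t=|z|$ does not make the secant slope $t\mapsto u(t)/(t-r)=(u(t)-u(r^-))/(t-r)$ nondecreasing --- that would require convexity in $t$ itself --- so the boundary $\liminf$ need not dominate the interior values. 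Indeed, for a radial $u\in\Fm(\B^n(0,r))$ with $u(0)=-\infty$ (e.g.\ the fundamental solution normalised to vanish on the boundary) the supremum in (b) is $+\infty$ while the boundary $\liminf$ is finite, so no argument localized near $\partial\B^n(0,r)$ can close this step; whatever fix one adopts must use information on all of $[0,r)$. The paper's own proof meets the same difficulty with a contradiction argument appealing to ``convexity of $u$ on $[t_0,r)$''; if you follow that route you must justify convexity in the correct variable and track how the reparametrisation distorts the constant, which is precisely the point you identified as sensitive and left open.
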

\begin{proof}
	If $(b)$ holds then 
	$u(z) \ge a_n (M+1) \Big(1-\frac{r^{2(n/m-1)}}{|z|^{2(n/m-1)}}\Big)$ on a small \nhd\ of $\partial \mathbb B^n (0,r).$ This implies (a) since the function on the right-hand side belongs to $\Fm (\mathbb B^n (0,r))$. 
	On the other hand, if (a) is  true then we first apply Proposition~\ref{Prop2} to $\rho(z):= 1-\frac{r^{2(n/m-1)}}{|z|^{2(n/m-1)}}$ to obtain
	\begin{equation}\label{equa1}
	\liminf\limits_{t \to r} \frac{u(t)}{t-r} \le a_n M(r).
	\end{equation}
	Now suppose (b) is false then there exists $t_0 \in (0,r)$ and $\la>a_nM(r)$such that
	$$u(t_0)<\la(t_0-r).$$
	Since  $\lim\limits_{t \uparrow r} u(t)=0,$ we may apply convexity of $u$ on $[t_0, r)$
	to conclude that 
	$$u(t)<\la(t-r).$$
	This is a contradiction to (\ref{equa1}). We are done. 
\end{proof}
We now proceed to the proof of Theorem B. The proof requires 
the following auxiliary result, which might be of independent interest.
\begin{lemma}\label{Lem4}
	Let $u \in \Fm (\mathbb B^n (0,r)).$ Then for all $\de \in (0,r)$ we have
	$$\fr1{\de}\int\limits_{\vert z\vert=r-\delta} u(z) d\sigma (z) \ge 
	-a_nr^{2n-2}\big (\int\limits_{\Om} (dd^c u)^m\wedge\omega^{n-m}\big )^{1/m}.$$
\end{lemma}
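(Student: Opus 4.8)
The plan is to prove the spherical-average estimate for $u \in \Fm(\mathbb B^n(0,r))$ by comparing $u$ against an explicit radial barrier and then applying the quantitative comparison of total masses afforded by Proposition~\ref{Prop2}. The natural radial model is $\rho(z):= 1-\frac{r^{2(n/m-1)}}{|z|^{2(n/m-1)}}$, which already appears in the proof of Proposition~\ref{Prop3}; this function lies in $\Fm(\mathbb B^n(0,r))$, vanishes on $\partial\mathbb B^n(0,r)$, and one can compute its total $m$-Hessian mass explicitly in terms of $r$ and $n$. First I would record that mass computation so as to pin down the constant $a_n$; the radial symmetry reduces $(dd^c\rho)^m\wedge\omega^{n-m}$ to an elementary ODE-type quantity, and integrating gives a clean power of $r$.

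The key analytic step is to translate the spherical average on $\{|z|=r-\delta\}$ into a pointwise statement that can be fed into Proposition~\ref{Prop2}. Since $u$ need not be radial, I cannot argue pointwise as in Proposition~\ref{Prop3}; instead I would exploit that $u$ is $m$-subharmonic, hence subharmonic, so its spherical averages $A(s):=\frac{1}{\sigma(|z|=s)}\int_{|z|=s} u\,d\sigma$ form an increasing convex-type function of the radial variable that is itself the restriction of a \emph{radial} $m$-subharmonic function. More precisely, the spherical symmetrization (averaging over the rotation group, which is exactly the device of Lemma~\ref{lem 1 main2}) produces a radial $\tilde u \in \Fm(\mathbb B^n(0,r))$ with $\tilde u(s)=A(s)$ and total mass controlled by that of $u$. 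Then Lemma~\ref{lem 1 main2}(b) gives $\int (dd^c\tilde u)^m\wedge\omega^{n-m} \le \int (dd^c u)^m\wedge\omega^{n-m}$, and applying Proposition~\ref{Prop2} (or directly the barrier estimate underlying Proposition~\ref{Prop3}(b)) to $\tilde u$ against $\rho$ yields
\[
\liminf_{s\to r}\frac{\tilde u(s)}{s-r} \le a_n\,\fr1{r}\Big(\int\limits_{\Om}(dd^c u)^m\wedge\omega^{n-m}\Big)^{1/m}.
\]
Because $\tilde u$ is radial and vanishes at the boundary, convexity in the radial variable upgrades this liminf to the uniform bound $\tilde u(s) \ge a_n(s-r)\,\fr1{r}(\cdots)^{1/m}$ for $s\in(0,r)$, exactly as in the proof of Proposition~\ref{Prop3}. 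Setting $s=r-\delta$ and unwinding the normalizing factor $\sigma(|z|=r-\delta)=a_n r^{2n-1}$ converts this into the claimed estimate, with the surface-area power $r^{2n-1}$ combining with the $1/r$ from $M(r)$ to give the stated $r^{2n-2}$.

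The main obstacle I anticipate is the symmetrization step: justifying that averaging $u$ over the unitary group yields an element of $\Fm$ with the same boundary values and with total mass no larger than that of $u$, and that the averaged function is genuinely radial and retains convexity along rays. The Hessian measure is nonlinear in $u$, so monotonicity of mass under symmetrization is not automatic; the correct framework is precisely Lemma~\ref{lem 1 main2}, taking $X$ to be the unitary group $U(n)$ with normalized Haar measure $\mu$ and $u(z,a):=u(a\cdot z)$, which is $m$-subharmonic in $z$ for each fixed rotation $a$ and upper semicontinuous in $a$, and whose masses are all equal to $\int(dd^c u)^m\wedge\omega^{n-m}$ by rotational invariance of $\omega$. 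Verifying hypothesis (ii) and identifying $\tilde u$ with the spherical average require care, but once these are in place the remaining steps are the barrier comparison and the elementary convexity argument already used in Proposition~\ref{Prop3}. A secondary bookkeeping issue is tracking the several $n$-dependent constants so that they collapse into a single $a_n$.
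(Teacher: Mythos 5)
Your proposal is correct and follows essentially the same route as the paper: both symmetrize $u$ over rotations to obtain a radial $\tilde u\in\Fm(\mathbb B^n(0,r))$ whose total $m$-Hessian mass is dominated by that of $u$, then invoke the radial characterization of Proposition~\ref{Prop3} (resting on Proposition~\ref{Prop2} and the radial convexity argument) and identify $\tilde u(r-\delta)$ with the normalized spherical average of $u$. The paper merely cites \cite{DD19} for the symmetrization step, whereas you justify it explicitly via the averaging lemma (Lemma~\ref{lem 1 main2}) applied to $U(n)$ with Haar measure, which is exactly the intended mechanism.
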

\begin{proof}
We are going to use Proposition~\ref{Prop3} and 
a symmetrization trick as in \cite{DD19}.
Define $\tilde u$ as in \cite{DD19}. Note that
	$\tilde u$ is radial and belongs to $\Fm (\Om).$ Moreover
	$\int \limits_\Om (dd^c \tilde u)^m\wedge\omega^{n-m} \le \int \limits_\Om (dd^c u)^m\wedge\omega^{n-m}.$ 
	It then follows from Proposition \ref{Prop3} that 
	$$\tilde u(z) \ge (\vert z\vert-r)a_n M(r) \ \forall z \in \mathbb B^n (0,r).$$
	This implies that 
	$$\fr1{(r-\de)^{2n-1}}\int\limits_{\vert z\vert=r-\delta} u(z) d\sigma (z) \ge -\de a_nM(r).$$
	After rearranging the above estimate, we get our desired inequality.
\end{proof}
\begin{proof}[Proof of Theorem B]
The proof is splitted into two steps.

\n
{\it Step 1.} 
We will show that for $r \in (0,d(\eta))$ we have
$$\fr1{d(\eta)-r} \int\limits_{\{\{\vert z-\eta\vert=r\}\cap \Om\}} u(z) d\sigma(z)\ge -a_n d^{2n-2}\big (\int\limits_{\Om} (dd^c u)^m\wedge\omega^{n-m}\big )^{1/m}.$$
Consider the open ball $\Om':=\mathbb B(\eta, d(\eta)).$
Then $\Omega \subset \Om'$ and $\xi \in \partial \Om' \cap \partial \Om.$ By a sub-extension 
result \cite{HD}, we can find $u' \in \Fm (\Om')$ such that
$u' \le u$ on $\Om$ but $(dd^c u')^m\wedge\omega^{n-m} =\chi_{\Om} (dd^c u)^m\wedge\omega^{n-m}.$ Note that this method is inspired from \cite{CZ}.
Thus, by Lemma \ref{Lem4}, we obtain
$$\begin{aligned}
\fr1{d(\eta)-r} \int\limits_{\{\{\vert z-\eta\vert=r\}\cap \Om\}} u(z) d\sigma(z)
&\ge \fr1{d(\eta)-r} \int\limits_{\{\{\vert z-\eta\vert=r\}\}} u'(z) d\sigma(z)\\
&\ge -a_n d(\eta)^{2n-2}\big (\int\limits_{\Om'} (dd^c u')^m\wedge\omega^{n-m}\big )^{1/m} \\
&\ge -a_n d^{2n-2}\big (\int\limits_{\Om} (dd^c u)^m\wedge\omega^{n-m}\big )^{1/m}.
\end{aligned}$$
Therefore, we obtain the required estimate.

\n
{\it Step 2.} Completion of the proof. By the result obtained in the first step, such that for $t>0$ for all $r \in (0, d(\eta))$ we have
$$\vol_{2n-1} \{z \in \Om: u(z)<-t, \vert z-\eta\vert=r\}
\le \fr{d(\eta)-r}{t}a_n d^{2n-2}\big (\int\limits_{\Om} (dd^c u)^m\wedge\omega^{n-m}\big )^{1/m}.$$
Thus, for $\de \in (0, d(\eta))$, we obtain
$$\begin{aligned}
&\vol_{2n} \{z \in \Om: u(z)<-t, d(\eta)-\delta <\vert z-\eta \vert <d(\eta)\} \\
&=\int\limits_{d(\eta)-\delta}^{d(\eta)} \vol_{2n-1} \{z \in \Om: u(z)<-t, \vert z-\eta\vert=\la\}d\la\\
&\le a_n d^{2n-2}\big (\int\limits_{\Om} (dd^c u)^m\wedge\omega^{n-m}\big )^{1/m} \fr{\de^2}{t}.
\end{aligned}$$
The proof is thereby completed.
\end{proof}
\n
Concerning the geometry of the domain $\Om$ in Theorem D, we have the following question.
\begin{question}
Let $\Om$ be a bounded domain with $\mathcal C^2$ smooth boundary. Assume that $\Om$ is $m-$hyperconvex. Does $\Om$ admits a $\mathcal C^2$ smooth defining function which is $m-$subharmonic on $\Om$?
\end{question}
\n
If $m=n$ then the answer is yes according to a famous result of Diederich and Fornaess. Next we proceed to the

\begin{proof} [Proof of Theorem D]
By multiplying $\rho$ with a small positive constant we can assume $\rho >-1$ on $\Om.$
Since the gradient of $\rho$ is nowhere zero on $\partial \Om,$ using the implicit function theorem, we can find positive constants $C_1,C_2$
such that
\begin{equation}\label{inequa3}
	C_1d(z,\partial\Omega)\leq-\rho(z)\leq C_2d(z,\partial\Omega) \ \forall z\in\Omega.
\end{equation}
We consider two cases

\n
{\it Case 1.} $u\geq a\rho$ in $\Omega$ for some $a>0$. For $\ve>0$, we let  $$\Omega_\ve:=\{z\in\Omega:\text{d}(z,\partial\Omega)>\ve\}.$$
We then define on  $\Omega_\ve$ the function
$$u_\ve(z):=\frac{1}{c_n \ve^{2n}}\int\limits_{\B(z,\ve)}u(\xi)dV(\xi)= \frac{1}{c_n \ve^{2n}}\int\limits_{\B(0,\ve)}u(z+\xi)dV(\xi),$$
where $dV$ denote the Lebesgue measure on $\mathbb C^n$ and $c_n$ is the volume of unit ball in $\mathbb C^n$.
We have $u_\ve\in\SHm^-(\Omega_\ve)$ and $u_\ve\downarrow u$ when $\ve\downarrow0$.
Our key step is to estimate $u_\ve$ from below by a {\it fixed} multiple of $\rho$ for $\ve$ small enough.
To this end, for $\de>1$ and $0<\ve_0<1$, we consider the annulus $z\in\Omega$ such that
\begin{equation}\label{inequa8}
\Om^{\de,\ve_0}:= \{z \in \Om:	\ve_0<\ve=\text{d}(z,\partial\Omega)<2\de^2\ve_0\}.
\end{equation} 
So for $z \in \Om^{\de,\ve_0}$ we have
$$u_{\ve_0}(z)=\frac{1}{c_n \ve_0^{2n}}\Big(\int\limits_{B_1}u(\xi)dV(\xi)+\int\limits_{B_2}u(\xi)dV(\xi)\Big),$$
where 
$$B_1:= \{\xi\in\B(z,\ve_0): u(\xi)<-A(\ve+\ve_0)\}, B_2:=\B(z,\ve_0)\setminus B_1.$$  
Since $u\geq a\rho$ in $\Omega,$ using (\ref{inequa3}) we obtain
$$u_{\ve_0}(z)\geq\frac{1}{c_n \ve_0^{2n}}\Big(\int\limits_{B_1}-aC_2\text{d}(\xi,\partial\Omega) dV(\xi)+\int\limits_{B_2}-A(\ve+\ve_0)dV(\xi)\Big).$$
Observe that
$$B_1\subset\{\xi\in\Omega: \ \text{d}(\xi,\partial\Omega)<\ve+\ve_0,\ u(\xi)<-A(\ve+\ve_0)\}.$$
So by the assumption of the theorem we obtain
$$\vol_{2n}(B_1) \le C(\ve+\ve_0)^\alpha.$$
Combining this with (\ref{inequa8}), we obtain for $z \in \Om^{\de,\ve_0}$ the lower estimate for $u_{\ve_0}$
\begin{align*}
u_{\ve_0}(z)&\geq\frac{-aCC_2}{c_n \ve_0^{2n}}(\ve+\ve_0)^{\alpha+1}-A(\ve+\ve_0)\\
&\geq\frac{-2aCC_2}{c_n \ve_0^{2n}}(\ve+\ve_0)^{\alpha}\ve-2A\ve\\
&\geq\frac{-2aCC_2}{c_n}(2\de^2+1)^\alpha \ve_0^{\alpha-2n}\ve-2A\ve.
\end{align*}
Thus, by applying again (\ref{inequa3}) we get $$u_{\ve_0}(z)\geq \Big[\frac{2aCC_2}{c_nC_1}(2\de^2+1)^\alpha \ve_0^{\alpha-2n}+\frac{2A}{C_1}\Big]\rho(z).$$
Since $\alpha-2n>0$, the first term inside the bracket tends to $0$ when $\ve_0$ tends to $0$. Hence, there exists $\ve^*_0>0$ depending only on $a$ such that
\begin{equation}\label{inequa4}
	u_{\ve_0}\geq C_3\rho \text{ in }\Omega_{\ve_0},\text{ for all } \ve_0<\ve^*_0
\end{equation}
where $C_3:=\frac{2A}{C_1}+1$. 
Set
$$\de:= 2\frac{C_2}{C_1}\ \text{and}\ \lambda:= \frac{C_3}{\frac{1}{\de C_2}-\frac{1}{\de^2C_1}}.$$
For $\ve_0<\ve^*_0$, we will estimate $u_{\ve_0}(z)-\lambda \ve_0$ from above and from below
on $\partial\Omega_{\de\ve_0}$ and $\partial\Omega_{\de^2\ve_0}$ respectively. 
To this end, we first use (\ref{inequa3}) to obtain
\begin{equation}\label{inequa5}
	u_{\ve_0}(z)-\lambda \ve_0=u_{\ve_0}(z)-\frac{\lambda}{\de} \text{d}(z,\partial\Omega)\leq\frac{\lambda}{\de C_2}\rho(z)\ \text{for}\ z\in\partial\Omega_{\de \ve_0}.
\end{equation}
By (\ref{inequa4}) and (\ref{inequa3}), we have
\begin{equation}\label{inequa6}
	u_{\ve_0}(z)-\lambda \ve_0=u_{\ve_0}(z)-\frac{\lambda}{\de^2} \text{d}(z,\partial\Omega)\geq\Big(C_3+\frac{\lambda}{\de^2C_1}\Big)\rho(z)\ \text{for}\ z\in\partial\Omega_{\de^2\ve_0}.
\end{equation}
Combining (\ref{inequa5}), (\ref{inequa6}) and noting that 
$$\frac{\lambda}{\de C_2}=C_3+\frac{\lambda}{\de^2C_1},$$ we derive for $\ve_0<\ve^*_0$ the following estimates
$$\begin{cases}
u_{\ve_0}(z)-\lambda \ve_0\leq\beta\rho(z)\ \text{for}\ z\in\partial\Omega_{\de \ve_0}\\
u_{\ve_0}(z)-\lambda \ve_0\geq\beta\rho(z)\ \text{for}\ z\in\partial\Omega_{\de^2\ve_0}
\end{cases},$$
where $\beta=\frac{\lambda}{\de C_2}$. Now, for $\ve_0<\ve^*_0$, we consider
$$\widetilde{u}_{\ve_0}(z)=\begin{cases}
\beta\rho,&\text{in}\ \Omega\backslash\Omega_{\de \ve_0}\\
\max(\beta\rho,u_{\ve_0}-\lambda \ve_0),&\text{in}\ \Omega_{\de\ve_0}\backslash\overline{\Omega_{\de^2\ve_0}}\\
u_{\ve_0}-\lambda \ve_0,&\text{in}\ \overline{\Omega_{\de^2\ve_0}}
\end{cases}.$$
We have $\widetilde{u}_{\ve_0}\in\E_m^0(\Omega)$, $\widetilde{u}_{\ve_0}\downarrow u$ when $\ve_0\downarrow0$ and by the comparison principle, we have
$$\int\limits_\Omega(dd^c\widetilde{u}_{\ve_0})^m\wedge\omega^{n-m}\leq \beta^{2m}\int\limits_\Omega(dd^c\rho)^m\wedge\omega^{n-m},$$
for $\ve_0$ small enough. Therefore $u\in\Fm(\Omega)$ as we want.

\n
{\it Case 2.} Now we treat the general case. For $N \ge 1$, we set
$u_N:=\max \{u, N\rho\}.$ Then $u_N \in \Fm(\Omega)$ and $u_N \downarrow u.$ By the result obtained in Case 1, we have
$$\sup\limits_{N\ge 1} \int\limits_\Omega(dd^cu_N)^m\wedge\omega^{n-m}\leq \beta^{2m}\int\limits_\Omega(dd^c\rho)^m\wedge\omega^{n-m}.$$
Therefore $u\in\Fm(\Omega).$ The proof is thereby completed.
\end{proof}	
\n
\begin{proof} [Proof of Theorem E]
	Denote by $U(n)$ the set of unitary transformations from $\mathbb C^n$ to $\mathbb C^n.$
%For every $0<a<1$, we set 
%$$S_a:= \{\phi\in U(n): \|\phi-Id\|<a \}.$$
%For $\ve>0, a<1$ and $z\in\B^{n}_{1-\ve}:=\{w\in\C^n: \|w\|<1-\ve \}$, we define
For $0<a<1,\ \ve>0$ and $z\in\B^{n}_{1-\ve}:=\{w\in\mathbb C^n: \|w\|<1-\ve \}$, we define
\begin{center}
$u_{a,\ve}(z):=(\sup\{u((1+r)\phi(z)): \phi\in S_a, 0\leq r\leq\ve \})^*$,
\end{center}
where $S_a:= \{\phi\in U(n): \|\phi-Id\|<a \}.$
Since $m-$subharmonicity is preserved under unitary transformations, we infer that
$u_{a, \ve}$ is $m-$subharmonic on $\B^{n}_{1-\ve}$. Moreover, by upper-semicontinuity of $u$ we obtain
\begin{equation}\label{eq1 proof3}
\lim\limits_{\max (a,\ve)\to 0^+} u_{a, \ve}(z)=u(z), \ \forall z\in\Omega.
\end{equation}
We also note that if $z\neq 0$ then
\begin{equation}\label{eq2 proof3}
u_{a,\ve}(z):=(\sup\{u(\xi): \xi\in B_{a,\ve, z} \})^*,
\end{equation}
where
\begin{center}
$B_{a,\ve, z}:= \Big \{\xi\in\mathbb C^n: \|\dfrac{z}{\|z\|}-\dfrac{\xi}{\|\xi\|}\|<a, 
\|z\|\leq\|\xi\|\leq(1+\ve)\|z\|  \Big \}.$
\end{center}
Next we observe that there exist positive constants $C_1, C_2$ which do not depend on $a \in (0,1/2),\ve>0$ and $\xi$ such that
\begin{equation}\label{eq3 proof3}
C_1 a^{2n-1}\ve<\vol_{2n}(B_{a,\ve, z})<C_2a^{2n-1}\ve.
\end{equation}
On the other hand, by the assumption \eqref{eq main3} we deduce that for $0<a<1/2$, there exists $\ve_a \in (0,a)$
such that
$$\vol_{2n}\{\xi\in\B^{2n}:\|\xi\|>1-3\ve,u(\xi)<-3A\ve\}<C_1 a^{2n-1}\ve, \ \forall \ve \in (0,\fr{\ve_a}3).$$
Hence, by \eqref{eq3 proof3}, we have, for every $3\ve\geq 1-\|z\|\geq \ve$,
$$B_{a,\ve, z}\nsubseteq \{\xi\in\B^{n}:\|\xi\|>1-3\ve,u(\xi)<-3A\ve\}.$$
Combining this fact with \eqref{eq2 proof3} we conclude 	
that for $a \in (0,1/2)$, there exists
$\ve_a>0$ such that, for every $\ve_a>3\ve\geq 1-\|z\|\geq\ve>0$,
we have the following crucial estimate 
\begin{equation}
u_{a, \ve}(z)\geq -3A\ve.
	\end{equation}
Now for $a \in (0,1/2)$ and $0<\ve <\ve_a/3$, consider the following function
$$\tilde{u}_{a, \ve}(z):=
\begin{cases}
3A(-1+|z|^2) & \quad 1-\ve\leq \|z\| < 1,\\
\max\{3A(-1+|z|^2), u_{a, \ve}(z)-6A\ve \} &
		\quad 1-3\ve\leq \|z\|\leq 1-\ve,\\
		u_{a, \ve}(z)-6A\ve & \quad \|z\|\leq 1-3\ve.
\end{cases}$$
Then $\lim\limits_{z \to \partial \B^n} \tilde{u}_{a, \ve} (z)=0,$ and by (3.10) $\tilde{u}_{a, \ve} \in SH_m^{-} (\B^n).$ Furthermore, by Lemma 2.2 we get
\begin{center}
$\int\limits_{\B^{n}}(dd^c\tilde{u}_{a, \ve})^m\wedge\omega^{n-m}
=(3A)^m\int\limits_{\B^{n}} \omega^{n}<\infty$.
\end{center}
In particular $\tilde{u}_{a, \ve}\in\E^0_m(\B^{n})$.	
Finally, for $j \ge 1$, we consider 
$u_j:=\widetilde{u}_{2^{-j}, \fr{\ve_{2^{-j}}}3}$. By \eqref{eq1 proof3}, we have $u_j \to u$  pointwise on 
$\Om$. Moreover $\sup\limits_j\int\limits_{\B^{n}}(dd^c\tilde{u}_{j})^m\wedge\omega^{n-m}<\infty$, then by Lemma \ref{ae}, we have $u\in\F_m (\Omega)$ as desired.
\end{proof}

\end{document}